\documentclass[a4paper,10pt]{article}
\usepackage[utf8]{inputenc}

\usepackage{amssymb,amsmath,amsthm}
\usepackage{verbatim}
\usepackage{centernot}
\usepackage{stmaryrd}
\usepackage{graphicx}
\usepackage{caption}
\usepackage{subcaption}

\newtheorem{thm}{Theorem}[section]
\newtheorem{thm*}{Theorem}
\newtheorem{lem}[thm]{Lemma}

\newtheorem{prop}[thm]{Proposition}
\newtheorem{prop*}{Proposition}
\newtheorem{cor}{Corollary}
\newtheorem{defn}{Definition}
\theoremstyle{definition}
\newtheorem{ex}{Example}[section]
\theoremstyle{remark}
\newtheorem*{rmk}{Remark}

\newcommand{\R}{\mathbb R}

\makeatletter
\newcommand{\xmapsto}[2][]{\ext@arrow 0599{\mapstofill@}{#1}{#2}}
\def\mapstofill@{\arrowfill@{\mapstochar\relbar}\relbar\rightarrow}
\makeatother
\title{}
\author{}

\begin{document}

\author{Roberta Guadagni}
\title{Non-decomposable cobordisms via Lagrangian moves}
\date{}

\maketitle

\begin{abstract}
We establish the use of Lagrangian diagram moves (cf \cite{lin}, \cite{datta}) to build Lagrangian cobordisms of Legendrian knots. We then apply the moves to explore how to invert decomposable cobordisms, after adding some stabilizations. 
As an application, we show an explicit construction of a Lagrangian concordance from a stabilized $m(9_{46})$ Legendrian knot to a stabilized unknot, i.e. a non-decomposable Lagrangian concordance of nonempty knots.
\end{abstract}

\section{Introduction}

Legendrian knots and links belong to a natural cobordism category, where the morphisms are \textit{Lagrangian cobordisms} (Definition \ref{def:Lcob}). While smooth cobordisms can (via Morse theory) be fully decribed by sequences of projection diagrams, the same does not hold for general Lagrangian cobordisms.
One of the most successful techniques to build Lagrangian cobordisms is via \textit{elementary} local moves in the front projection (Figure \ref{fig:leg_moves}). This produces a wide variety of Lagrangian cobordisms, called \textit{decomposable}.

Examples of cobordisms that are not (Hamiltonian isotopic to) decomposable cobordisms were elusive until recently. The first such examples were presented in \cite{lin} and were built using Lagrangian projections and Lagrangian moves (Figure \ref{fig:lagr_moves}). These examples required the top link to be empty, but hinted at the possibility of using Lagrangian moves to build more elaborate examples.

The first goal of this paper is to establish the use of Lagrangian moves to build Lagrangian cobordisms of Legendrians.  The resulting cobordisms may not be exact, so we expand our study to include non-exact ones, which we call \textit{weak Lagrangian cobordisms} (cf Definition \ref{def:Lcob_weak}).
Building on \cite{lin},\cite{datta}, we prove:

\begin{thm} (restated from Proposition \ref{thm:leg_extension})
If the Lagrangian projections of two Legendrian knots are joined by a sequence of Lagrangian moves as in Figure \ref{fig:lagr_moves}, the knots are weakly Lagrangian cobordant. 
\end{thm}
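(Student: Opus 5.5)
The plan is to build the weak Lagrangian cobordism one move at a time and then concatenate. Weak Lagrangian cobordisms compose by stacking in the symplectization, and Legendrian isotopy gives a (weak) cylinder cobordism. So it suffices to treat a single Lagrangian move from Figure \ref{fig:lagr_moves}. In each case we produce a weak Lagrangian cobordism between the Legendrian lifts of the two diagrams. The lifts are determined by the Lagrangian projections, up to a choice of base height and up to isotopy through diagrams with the same crossing data.

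The framework is the following, following \cite{lin}, \cite{datta}. Work in $\R^4=T^*\R^2$ with the standard form, or equivalently in the symplectization of $(\R^3,dz-y\,dx)$, viewed as $\R_t\times\R^3$ with form $d(e^t\alpha)$. A Legendrian $\Lambda$ with Lagrangian projection $\pi(\Lambda)\subset\R^2_{xy}$ yields a Lagrangian cylinder $\{(t,\Lambda_t)\}$ whenever $\Lambda_t$ is a family of Legendrians. The key observation is that a family of immersed curves $\gamma_t\subset\R^2$ traces out a Lagrangian surface in the symplectization exactly when the signed areas enclosed by $\gamma_t$ evolve compatibly with the $z$-coordinates. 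Concretely, the lift of $\gamma_t$ need not close up: the $z$-gap equals the signed area. The trace surface is Lagrangian, though possibly non-exact, provided we allow the primitive to have nonzero periods. That is precisely the weak notion of Definition \ref{def:Lcob_weak}.

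For each move, namely the Reidemeister-type moves in the Lagrangian projection, the resolution or merging of a crossing (saddle), and birth or death of a small contractible loop, we write an explicit local model. The model is a one-parameter family of planar diagrams, supported in a disk, together with a Lagrangian handle or cylinder filling. For a crossing resolution we use the standard Lagrangian saddle in $\R^4$ given by the local model $\{(x,y)\mid \ldots\}$ of a clean Lagrangian $1$-handle. For loop births we use a Lagrangian disk cap over a small embedded loop.

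Outside the support disk the cobordism is a trivial cylinder. Area constraints are handled by a global compensating planar isotopy: any area change inside the disk is absorbed by slightly enlarging or shrinking some other region. This keeps the diagram the projection of an honest Legendrian at the two ends, even if in between only a weak Lagrangian exists. Exactness is not required, so we only need closedness of $\omega$ restricted to the surface, which holds because the surface is a graph over the trace of an area-compatible family.

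The main obstacle is making the gluing honest. Each local model must be cylindrical near its boundary and match the $t$-direction of the symplectization. The Legendrian lifts at the two ends must be genuine closed Legendrians, which needs the area conditions (crossing heights consistent with the diagram). To handle this, I would first reduce to the Lagrangian filling picture in $\R^4\cong$ the symplectization minus a point, as in \cite{lin}, realizing the move there. I would then use a Weinstein-neighborhood argument to straighten the ends into cylinders over $\Lambda_\pm$, as \cite{datta} does for the lift of Lagrangian moves. Checking that the straightening is compatible with the area bookkeeping is the delicate part.
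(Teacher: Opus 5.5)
Your plan to reduce to a single move inside the symplectization does not work. Definition \ref{def:Lcob_weak} needs Legendrian ends, but the intermediate diagrams of a sequence of Lagrangian moves are in general not Legendrian. For instance, after an $H_1$ or $H_2$ split each new component may carry nonzero total area, and then it has no closed Legendrian lift. So there is nothing to stack move by move.

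Your description of the trace surface also conflates two pictures. Slices of a surface are closed curves, and non-exactness comes from periods of $e^t\alpha|_L$ on cycles of $L$ other than the ends, not from a ``$z$-gap'' of a lift that fails to close. The movie of planar curves with an area constraint is the $\R^4$ picture, in which slices are arbitrary smooth links. (Since $\omega|_L=0$, Stokes' theorem for the primitive $-v\,dt-u\,ds$ shows that the total slice area $\oint u\,ds$ is conserved between levels.)

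This is why the paper composes only in $\R^4$. Each move gives an L-cobordism (Proposition \ref{prop:lagr_move}), and these are glued by Lemma \ref{lem:glueing}. The result is transported once, at the very end, through the slice-preserving exact symplectomorphism $\beta(t,x,y,z)=(t,e^tz,x,e^ty)$, which identifies the whole symplectization of $\R^3$ with $\R^4$. Because $\beta$ maps trivial cylinders to the expanding collars $t\mapsto\Lambda^t$, gluing such collars at both ends (Lemma \ref{lem:cobordisms}(3)) directly produces cylindrical ends over $\Lambda_1^{-a}$ and $\Lambda_3^{-b}$. No Weinstein-neighbourhood straightening is needed.

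Your last paragraph drifts toward this route but leaves out its crux, which is control of the upper end. Concatenating Lin's local models gives an L-cobordism from $\Lambda_1$ to some $K_3$ with $D_{K_3}=D_{\Lambda_2}$ but with uncontrolled height coordinate. So $K_3$ need be neither $\Lambda_2$ nor Legendrian, and Lemma \ref{lem:cobordisms}(3) cannot be applied. Datta's wiggling result (Proposition \ref{prop:lagr_move}) pins down the ends exactly, but only when each move is followed by a nonzero area move at every crossing. These extra area moves alter the final diagram and could destroy area equalities needed later.

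The paper deals with precisely this in three steps. First, after each move it inserts area moves of a weight $\delta$ that is negligible compared with all nonzero areas and area differences, the $H_{1,2}$ parameters, and $\varepsilon/(nC)$. Second, it retunes the area parameter of every $H_1,H_2$ so that the resulting components keep exactly their prescribed total areas, so later caps still apply. Third, it invokes Lemma \ref{lem:leg_diag}, with $\varepsilon$ chosen from the crossing heights of $\Lambda_2$, to see that the perturbed final diagram is $D_{\Lambda_3}$ with $\Lambda_3$ Legendrian isotopic to $\Lambda_2$.

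Your ``global compensating planar isotopy'' cannot substitute for this. The areas at the ends already agree, so it targets the wrong issue; the height coordinate is the real issue. Moreover, areas are part of the diagram data and can change only through the directional move $A_1$, not by free redistribution.

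Finally, some of your local models are wrong:
\begin{itemize}
\item A Lagrangian disk cap over a small embedded loop is impossible, since the conserved slice area would have to tend to $0$ at the extremum of $t$. Births and caps are over curves of total area zero, such as a figure-eight with equal lobes.
\item Your list of moves omits $A_1$ entirely.
\item The Lagrangian condition is $\omega|_L=0$, not closedness of $\omega|_L$, which is automatic on a surface.
\end{itemize}
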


Sometimes exactness can be recovered, and the result upgraded to a genuine Lagrangian cobordism, see e.g. Corollary \ref{cor:lagr_conc}.

\begin{rmk}
Notice the following shortcoming: starting from the Lagrangian projection of a Legendrian, then performing Lagrangian moves, there is no guarantee a priori that the final diagram is the Lagrangian projection of a Legendrian.
\end{rmk}

\begin{figure}[!h]
    \begin{subfigure}{\textwidth}
        \centering
        \includegraphics[width=.7\linewidth]{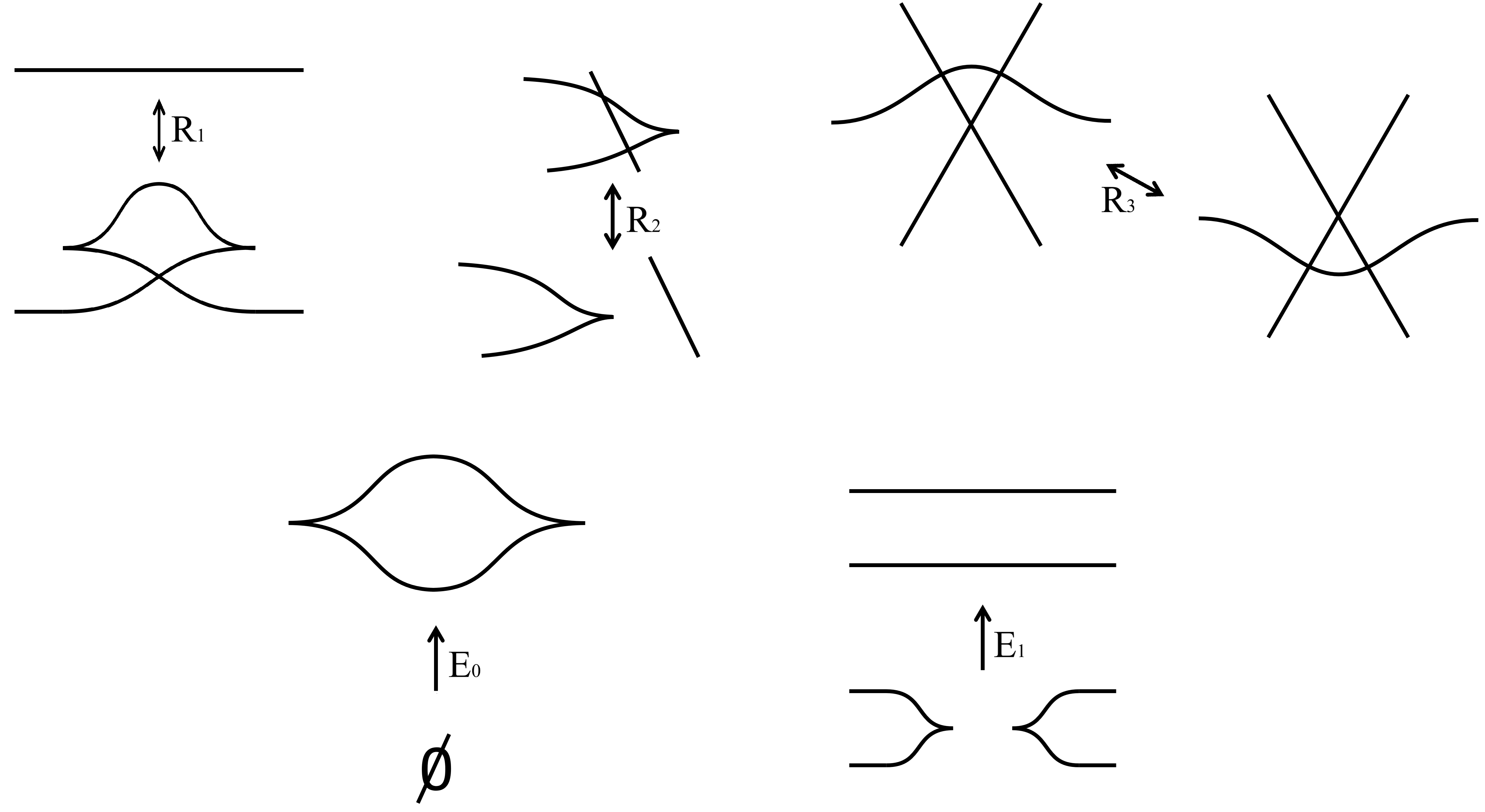}
        \caption{Decomposable moves in the front diagram.}
        \label{fig:leg_moves}
    \end{subfigure}
    \hfill

    \begin{subfigure}{\textwidth}
        \centering
        \includegraphics[width=.7\linewidth]{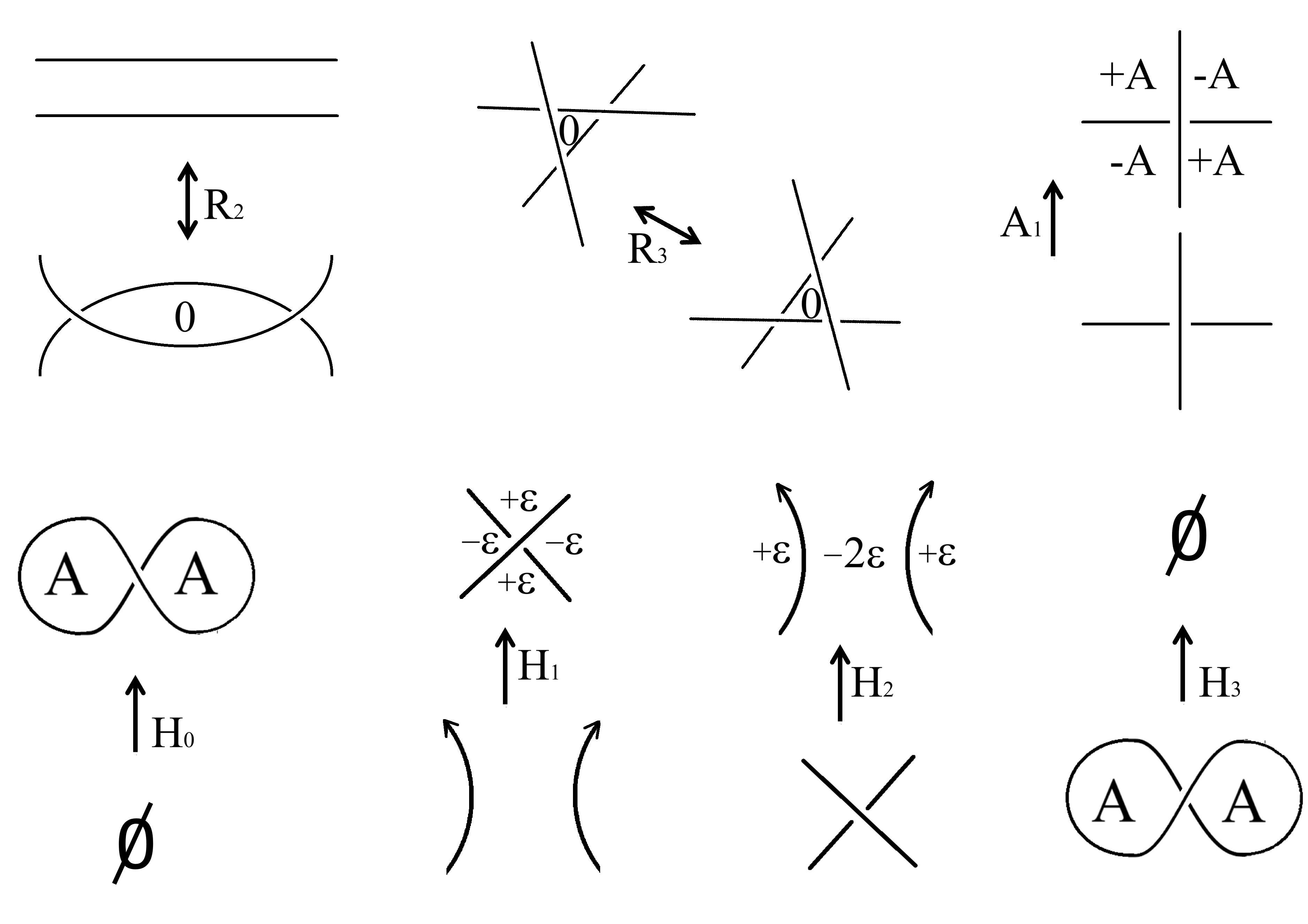}
        \caption{Lagrangian moves in the Lagrangian projection.}
        \label{fig:lagr_moves}
    \end{subfigure}
    \caption{Diagram moves.}
    \label{fig:moves}
\end{figure}

Recent work in \cite{Gol1}, \cite{Gol2} has produced more non-decomposable examples, with non-empty ends. Such examples rely on abstract $h$-principle arguments, so they don't provide an explicit construction. A core insight of their work: given a decomposable concordance of knots $C:K_1\prec K_2$, there must exist a Lagrangian concordance $C^r:S(K_2)\prec S(K_1)$ in the reverse direction, after adding sufficiently many positive and negative stabilizations (there is no estimate on the number of stabilizations needed) (cf \cite{rizell2024lagrangian}, \cite{Gol1}).

The present results stem from an effort to interpret the examples of \cite{Gol1}, \cite{Gol2} through the lens of Lagrangian moves. We specifically investigate how to invert a decomposable concordance, and more generally a given decomposable cobordism $C:L_1\prec L_2$, to obtain a Lagrangian cobordism $C^r:S(L_2)\prec S(L_1)$, using Lagrangian moves. 
We provide a set of tools that can be applied broadly. In particular, we use them to build the motivating example: a non-decomposable concordance in line with \cite{Gol1}.

\begin{thm}\label{thm:main_example}
There exists a Lagrangian concordance $$C:S_+^{4}S_-(m(9_{46}))\prec S_+^{4}S_-(\Upsilon)$$ built out of Lagrangian diagram moves, where $m(9_{46})$ is a Legendrian, max-tb mirror $9_{46}$ knot, $\Upsilon$ is the max-tb unknot, and $S_+^{4}S_-$ indicates four positive stabilizations and one negative.
\end{thm}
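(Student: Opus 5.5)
The strategy is to invert a known decomposable concordance from the max-tb unknot $\Upsilon$ to $m(9_{46})$, working in the Lagrangian projection. Recall the standard decomposable concordance $\Upsilon\prec m(9_{46})$ of Chantraine, built from one saddle (pinch) move and one minimum (birth of a max-tb unknot component), together with Legendrian isotopies. Reading this movie backwards is not a Lagrangian cobordism: the reversed saddle and the reversed birth (a death) are not decomposable moves. The plan is to realize each reversed step as a sequence of the Lagrangian moves of Figure \ref{fig:lagr_moves}, after stabilizing. By Proposition \ref{thm:leg_extension}, such a sequence gives a weak Lagrangian cobordism. Exactness would then be recovered as in Corollary \ref{cor:lagr_conc}: the surface has genus zero with connected ends, so it is a concordance, and the primitive of $\lambda$ can be arranged constant on the ends.

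\textbf{Step 1.} Take the front diagram of max-tb $m(9_{46})$ and locate the two-component link obtained by resolving the saddle crossing used in Chantraine's concordance, namely a two-component unlink-type diagram. Convert it to a Lagrangian projection via Ng's resolution.

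\textbf{Step 2.} Add stabilizations $S_+^{4}S_-$ as zigzags, that is, small loops in the Lagrangian projection. Place them next to the saddle region and next to the small unknot component that must die.

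\textbf{Step 3.} Use the extra loops to perform, in the Lagrangian projection, Reidemeister-type moves that are \emph{not} Legendrian isotopies but are allowed Lagrangian moves. The aim is to transform the stabilized $m(9_{46})$ diagram into one where a contractible embedded loop bounds a disk of the correct sign of area. That loop can then be capped off (the Lagrangian ``minimum'' read backwards, i.e. a death), and the pinch reversed as a Lagrangian saddle.

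Throughout, I would track the signed areas of the diagram regions. This is needed to check that the final diagram is the Lagrangian projection of a Legendrian (the Remark's shortcoming), namely that the height function closes up, and that the move-area conditions of Proposition \ref{thm:leg_extension} hold.

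\textbf{Step 4.} Simplify the resulting diagram by Legendrian isotopy to $S_+^4S_-(\Upsilon)$. Check the classical invariants as a sanity test. The rotation number must be preserved, and $4-1=3$ is consistent on both ends since $r(m(9_{46}))=r(\Upsilon)=0$. We also need $tb(m(9_{46}))=-1=tb(\Upsilon)$, which concordance requires.

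The main obstacle is Step 3: finding an explicit diagrammatic sequence in which the area constraints can be satisfied. The reversed death requires a region of the right sign whose area can be shrunk to zero. The stabilizations supply area budget, since the loops can be grown arbitrarily, but they must be routed through the crossings of $m(9_{46})$ without violating the positivity constraints at each move. I would first try to route a single positive stabilization loop through the saddle region to create a negative-area bigon. I would then add further stabilizations only where an area inequality fails; presumably that is what forces the count of four positive and one negative.
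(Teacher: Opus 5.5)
\paragraph{Main gap: the explicit move sequence is missing.} Your framing matches the paper's at the coarsest level. You reverse the decomposable concordance $\Upsilon\prec m(9_{46})$ by Lagrangian moves on the stabilized diagrams, then apply Proposition \ref{thm:leg_extension} and Corollary \ref{cor:lagr_conc}. But the theorem is an existence statement, and its whole content is the explicit move sequence (Proposition \ref{prop:main_diagrams}). Your Step 3, which you yourself flag as the main obstacle, is left as a search (``I would first try\ldots presumably that is what forces the count''). Nothing in the proposal shows that $S_+^4S_-$ suffices. The missing ingredients are concrete:
\begin{enumerate}
\item Areas must be read off a linear front diagram (Section \ref{sec:front_diagrams}). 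Ng's resolution, which you propose, does not represent areas. The paper also uses Legendrian isotopy of the linear front to arrange the inequalities later moves need: $C_2,C_3,C_4,h_1,h_2>2(D_1+\iota)$, and a thin ribbon with $\delta_i\ll\varepsilon_j,\iota$.
\item Before the reversed saddle, one performs the ribbon trick $Ri_1$ (Figure \ref{fig:ri1}). It plants large areas $A=D_1+\iota$ and $B=D_1+\iota/2$ along the ribbon. Its parameter is chosen to cancel the correction term $\bar\delta$, so that after $H_2$ both components have total area exactly $0$.
\item The stabilizations are not a reservoir of area. Their role is to realize $\bar{A_1}$ (Figure \ref{fig:invert_A1}), a partial inverse of the otherwise irreversible move $A_1$. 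From it one builds the trickle move $T_2$ (Figure \ref{fig:trickle2}), which lets the large $A$-areas pass through the smaller region $D_2$ in Step 7. Without this, the two components produced by $H_2$ are smoothly unlinked but ``symplectically linked'': large areas cannot be pushed through smaller regions to separate them.
\end{enumerate}

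\paragraph{The death step.} Your picture of this step is also off. The cap (packaged in the paper as $U_1$, Figure \ref{fig:curls}) does not act on ``a contractible embedded loop'' inside the knot. It removes an entire link component, which must already be split off, have total area $0$, be separated in the plane from the other component, and satisfy $B>C$. So the order is forced:
\begin{enumerate}
\item the ribbon trick;
\item $H_2$, splitting the knot into two components;
\item unwinding the components apart with $T_1,T_2$;
\item $U_1$.
\end{enumerate}
Your Step 3 suggests capping a loop of the still-connected diagram and then reversing the pinch. A cap cannot be applied before the split. Finally, this handle count (one $H_2$, one cap, everything else $R_2,R_3,A_1$) is exactly what certifies that the smooth cobordism is a concordance. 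You assert this (``genus zero'') without deriving it.
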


In particular, this bounds the number of necessary stabilizations (five). No claims are made on the sharpness of this bound.

The following open questions remain:
\begin{itemize}
\item can all decomposable cobordisms be inverted using these methods?
\item what is the minimum number of (positive/negative) stabilizations needed?
\end{itemize}





\textbf{Acknowledgements.} Thanks to Josh Sabloff for explaining the work in \cite{Gol1} and and asking the question. Thanks to Francesco Lin, Ipsita Datta, Georgios Dimitroglou Rizell, Roman Golovko for helpful conversations.

\section{Preliminaries}

This paper assumes familiarity with the fundamentals of contact geometry and Legendrian knot theory, as well as the construction of decomposable Lagrangian cobordisms of Legendrian knots as surveyed e.g. in \cite{survey}.
To ease the exposition, we will restrict to Legendrian links in the standard contact space $$\Lambda\subset(\R^3,\alpha=dz-ydx)$$
so that the front projection and the Lagrangian projection are defined globally by projecting onto the $xz$- and $xy$-plane respectively.
Below we explicitly review the needed material on Lagrangian diagrams and moves (cf \cite{lin}, \cite{datta}). 

\subsection{Clashing definitions of Lagrangian cobordisms}

Let us acknowledge an unfortunate overlap of definitions in the literature. 
For Legendrian links in contact $\R^3$ we have the following standard definition, rooted in the work of Arnol'd \cite{arn1},\cite{arn2} and subsequently \cite{egh2000sft}:
\begin{defn}\label{def:Lcob}
Given \textbf{Legendrian} links $\Lambda_1,\Lambda_2\subset (\R^3_{x,y,z},\alpha = dz-ydx)$, a \textbf{Lagrangian cobordism} $L:\Lambda_1\prec \Lambda_2$ is an \textbf{exact} orientable Lagrangian submanifold of the symplectization $$L\subset (\R\times\R^3,\omega=d(e^t\alpha))$$ that admits a pair of real numbers $T_\pm$ satisfying:
\begin{itemize}
\item $L |_{t\in [T_-,T_+]}$ is compact ($[T_-,T_+]$ is the \textbf{support} of the cobordism),
\item $L |_{t\in (-\infty,T_-]} = (-\infty,T_-] \times \Lambda_1$ (\textbf{cylindrical lower end}), 
\item $L |_{t\in [T_+,\infty)}= [T_+,\infty) \times \Lambda_2$ (\textbf{cylindrical upper end})
\end{itemize}
and the primitive of $e^t\alpha$ along $L$ can be chosen to be simultaneously constant for $t = T_-$ and for $t = T_+$ (\textbf{leveledness} property);
\end{defn}
 Leveledness is automatically true for cobordisms of knots. In general it can be omitted (thus defining \textit{unleveled} cobordisms), but we don't pursue this here. Similarly, we omit discussing non-orientable versions.

Within symplectic $\R^4$, cobordisms (of smooth links) that happen to be Lagrangian are sometimes also called `Lagrangian cobordisms' (cf \cite{lin},\cite{datta}), however we rename them for clarity (below, $\pitchfork$ denotes transversality): 
\begin{defn}\label{def:LcobR4} (cf \cite{lin},\cite{datta})
Given links $K_1,K_2\subset \R^3_{v,s,u}$, an \textbf{L-cobordism}  $L:K_1 \triangleleft K_2$ is any properly embedded, oriented, compact Lagrangian submanifold $$L\subset(\R^4_{t,v,s,u},\omega=dt\wedge dv + ds\wedge du)$$ such that:
\begin{itemize}
\item  $L\subset\{a\leq t\leq b\}$ for some $a<b$ ($[a,b]$ is the \textbf{support} of $L$);
\item $\partial L = (L\cap\{t=a\}) \cup (L\cap\{t=b\})$, $L \pitchfork \{t=a\}$, $L \pitchfork\{t=b\}$;
\item $\partial_-L:= L\cap\{t=a\} = K_1\times\{a\} $ (\textbf{lower boundary} $K_1$)
\item $\partial_+L:= L\cap\{t=b\}= K_2\times\{b\}$ (\textbf{upper boundary} $K_2$).
\end{itemize}
\end{defn}

We make a different choice of coordinates compared to \cite{datta}. This aligns the appearance of the moves with the diagrams in \cite{lin}, so that the diagrams can later be compared to the standard Lagrangian projection of Legendrian knots, with the moves happening in the positive symplectization direction.

Definitions \ref{def:Lcob}, \ref{def:LcobR4} can clash, since exactness is assumed in \ref{def:Lcob} and not \ref{def:LcobR4}. We thus add the \textbf{weak} qualifier to Lagrangian cobordisms that potentially fail to be exact (the leveledness property is then also dropped):
\begin{defn}\label{def:Lcob_weak}
Given Legendrian links $\Lambda_1,\Lambda_2\subset (\R^3,\alpha = dz-ydx)$, a \textbf{weak Lagrangian cobordism} $L:\Lambda_1\prec^w \Lambda_2$ is an orientable Lagrangian submanifold (not necessarily exact) $L\subset (\R\times\R^3,\omega=d(e^t\alpha))$ that admits a pair of real numbers $T_\pm$ satisfying:
\begin{itemize}
\item $L |_{t\in [T_-,T_+]}$ is compact ($[T_-,T_+]$ is the \textbf{support} of the cobordism),
\item $L |_{t\in (-\infty,T_-]} = (-\infty,T_-] \times \Lambda_1$ (\textbf{cylindrical lower end}), and
\item $L |_{t\in [T_+,\infty)}= [T_+,\infty) \times \Lambda_2$ (\textbf{cylindrical upper end}).
\end{itemize}
\end{defn}

There are other subtle differences between Definitions \ref{def:Lcob}, \ref{def:LcobR4}. In particular, Definition \ref{def:LcobR4} yields a non-reflexive relation: in general, $K\ntriangleleft K$ (cf \cite{datta}); Definition \ref{def:Lcob} yields a reflexive relation:  $\Lambda\prec_{\R\times\Lambda}\Lambda$ (more generally, Legendrian isotopic links always admit a Lagrangian cobordism in the symplectization, see \cite{chantraine2010concordance}).

\subsection{Lagrangian diagrams and moves}

All fundamental diagram moves are shown in Figure \ref{fig:moves}. 

The decomposable elementary moves are: Reidemeister $R_1,R_2,R_3$; $0$-handle $E_0$; $1$-handle $E_1$ (cf \cite{survey}). 
For the purpose of building decomposable cobordisms, the only relevant data in a front projection is its smooth topology.

Lagrangian moves behave differently: here it is important to track the areas enclosed by the diagram as well as its smooth topology.
\begin{defn}(cf \cite{lin,datta})
A \textbf {Lagrangian diagram} $D$ is given by the data:
\begin{itemize}
\item a regular link diagram $\Delta \subset\R^2_{x,y}$;
\item a function $\mathcal{A}:\mathcal{D}(\Delta)\rightarrow\R_{\geq 0}$ where $\mathcal{D}(\Delta)$ is the set of disks in $\R^2_{xy}\backslash \Delta$.
\end{itemize}
Given a smooth link $K\subset\R^3_{x,y,z}$ (with regular $xy$ projection), the \textbf{Lagrangian diagram} $D_K$ \textbf{associated to} $K$ is given by:
\begin{itemize}
\item the $xy$-projection of $K$, $\pi_{xy}K$;
\item the function $\mathcal{A}:\mathcal{D}(\pi_{xy}K)\rightarrow\R$ associating to each disk $\delta$ its ($dx\wedge dy$)-area.
\end{itemize}
A diagram is  \textbf{Legendrian} if $D=D_\Lambda$ for a Legendrian $\Lambda$.
Two diagrams are \textbf{equivalent} when they are isotopic in $\R^2$ and $\mathcal{A}$ takes identical values on corresponding disks.
\end{defn}
Notice that any abstract Lagrangian diagram $D$ with strictly positive areas ($\mathcal{A}>0$) can be tweaked in $\R^2$ to find a smooth link $K$ such that $D\cong D_K$. Lemma \ref{lem:leg_diag} later implies that equivalent Legendrian diagrams correspond to Legendrian isotopic knots. 
\begin{figure}[htbp]
    \centering
    \makebox[\textwidth][c]{%
        \includegraphics[width=.7\linewidth]{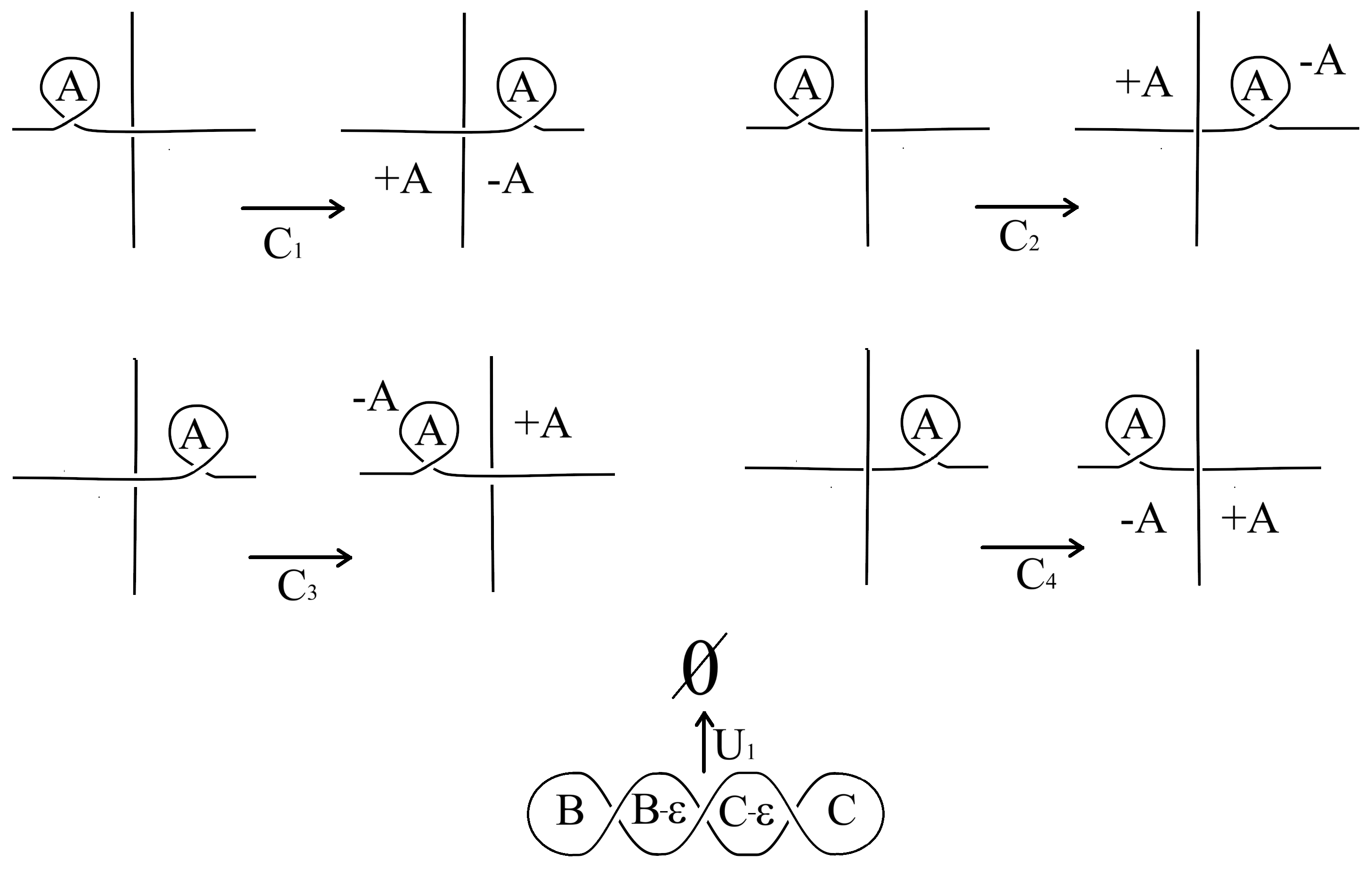}
    }
    \caption{Curl moves and unknot trick ($B>C\geq\varepsilon\geq 0$)}
    \label{fig:curls}
\end{figure}

Lagrangian moves are defined between (equivalence classes of) Lagrangian diagrams.
The fundamental Lagrangian moves are shown in Figure \ref{fig:lagr_moves} and include: Reidemeister $R_2,R_3$ (no $R_1$); area change $A_1$; $0$-handle $H_0$; $1$-handles $H_1,H_2$; $2$-handle $H_3$. The notation $+A,-A$ refers to increases/decreases in the areas; in particular, a Lagrangian move decreasing an area by an amount $A$ can only be applied if the current area is at least $A$. We allow areas to be exactly $0$ in the formal diagrams, so the moves can be applied literally (the assumption being that the areas surrounding a $0$ are themselves non-zero).


We will rely on more moves, derived from the fundamental ones. Figure \ref{fig:curls} presents the curl moves $C_{1-4}$ and the `unknot trick' $U_1$, which only applies if $B>C$. Curl moves can be built from $R_2,R_3,A_1$ while $U_1$ is built from $R_2,A_1$ plus one cap $H_3$ (cf \cite{lin}).
In Section \ref{sec:areas} we will further expand this toolbox.
\begin{rmk}
Pay attention to how the move $A_1$ is strictly directional and can't be reversed. As a consequence, all curl moves are strictly directional as well.
\end{rmk}

The following is directly derived from Proposition 3.10 of \cite{datta}:
\begin{prop}\label{prop:lagr_move}(\cite{datta}, Proposition 3.10)
Given links $K_1,K_2\subset\R^3$, if $D_{K_2}$ can be obtained from $D_{K_1}$ via a move from Figure \ref{fig:lagr_moves}, plus area moves including a non-zero area move around each crossing of $K_2$, there exists an L-cobordism $L:K_1\triangleleft K_2$.
\end{prop}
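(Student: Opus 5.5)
This statement is imported essentially verbatim from Proposition 3.10 of \cite{datta}. The plan is therefore to translate Datta's construction into the present coordinates, not to reprove it from scratch. Concretely, we identify $\R^4_{t,v,s,u}$ with the symplectic form $dt\wedge dv+ds\wedge du$, so that the $(s,u)$-plane carries the diagram, i.e.\ the $xy$-projection. The move should then happen as $t$ increases, matching the orientation conventions of \cite{lin}. We then check that each move of Figure \ref{fig:lagr_moves} corresponds, under this identification, to one of Datta's elementary L-cobordisms.

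The construction is local. Away from a small disk $U$ in the diagram plane containing the move, the cobordism is a trace of the diagram itself. It has the form $\{(t,v(t,\theta),\gamma_t(\theta))\}$, where $\gamma_t$ is a family of immersed curves in the $(s,u)$-plane. The Lagrangian condition for such a family is
\[
\partial_\theta v = \omega_{su}(\partial_t\gamma_t,\partial_\theta\gamma_t),
\]
which determines $v$ up to a function of $t$. The extra coordinate $v$ must close up around each component of $K_2$ and separate the strands at each crossing so that the surface is embedded. The first requirement is an integral condition: the rate of change of the enclosed signed area is $\frac{d}{dt}\oint \gamma_t^*(s\,du)=\oint \partial_\theta v\, d\theta$, and this must vanish. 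This is exactly why areas must be tracked, and why $A_1$ is directional, since the sign of $\partial_t$(area) is tied to the sign of $v$-displacement.

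Inside $U$, the $0$-, $1$- and $2$-handles ($H_0,H_1,H_2,H_3$) are realized by standard Lagrangian disk and saddle models, e.g.\ the Lagrangian saddle $\{s u = t\cdot c\}$-type surfaces suitably rotated. Reidemeister $R_2,R_3$ are realized by Lagrangian traces of a planar isotopy with $v$ compensating the area changes. We then glue each local model to the trace outside $U$.

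The main obstacle is embeddedness at crossings. Over a double point of the diagram, the two strands must have distinct $v$-values throughout the support, and at the top boundary the surface must be transverse to $\{t=b\}$ with boundary exactly $K_2\times\{b\}$. This is where the hypothesis enters: we use the required nonzero area move around each crossing of $K_2$. We first shrink and grow the adjacent regions, which makes $\partial_\theta v$ have a prescribed nonzero jump between the two strands at the double point, separating them in $v$. We then use the remaining freedom, namely adding a function of $t$ to $v$ on each component, to match the boundary to $K_2\times\{b\}$; the $z$-coordinate of $K_2$ is recovered from the areas. I would verify this step carefully by a direct computation in a neighborhood of a single crossing, following Datta's argument.
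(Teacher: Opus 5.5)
You are right that this is Datta's Proposition~3.10 in other coordinates, but the proposal never actually establishes the statement. The paper's proof is only a hypothesis check for Datta's proposition:
\begin{enumerate}
\item every move of Figure~\ref{fig:lagr_moves} preserves total area;
\item all areas of $D_{K_1}$ and $D_{K_2}$ are strictly positive, which is automatic for diagrams of actual links;
\item because there is a nonzero area move around every crossing of $K_2$, any disk whose corners are all positive (resp.\ negative) has strictly increasing (resp.\ decreasing) area;
\item each move $M$ of the figure is equivalently Datta's move $A_0\circ M\circ A_0$.
\end{enumerate}
Your first paragraph gestures only at the last item. You note that area preservation is necessary, but you never check it for the moves. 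The corner-sign monotonicity condition, which is exactly where the crossing hypothesis is used, does not appear at all.

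The reconstruction you offer instead breaks at the step that separates Datta's result from Lin's. In this proposition $K_2\subset\R^3_{v,s,u}$ is an arbitrary smooth link, so its $v$-coordinate is \emph{not} recovered from the areas of $D_{K_2}$; that holds only for Legendrians. The Lagrangian condition fixes the $v$-profile of the slice at $t=b$, up to one additive constant per component, in terms of the velocity of the moving diagram at $t=b$. Adding a function of $t$ to $v$ only moves those constants. So your procedure yields some $K_3$ with $D_{K_3}=D_{K_2}$, but in general $K_3\neq K_2$. This is precisely the weaker statement that the Remark after this proposition attributes to Lin's construction.

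The missing idea is Datta's wiggling argument. You must prescribe the terminal normal velocity of the diagram so that it induces exactly the $v$-profile of $K_2$. The resulting instantaneous change of each disk's area is a signed sum of the $v$-gaps at its corners. The $v$-gap at a crossing is the integral of $\partial_\theta v$ along the arc joining the two preimages, not a ``jump'' of $\partial_\theta v$. This is why the area moves must be nonzero at every crossing of $K_2$ and compatible with the corner signs.

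A minor point: with $\omega=dt\wedge dv+ds\wedge du$, the Lagrangian condition reads $\partial_\theta v=-\,ds\wedge du(\partial_t\gamma_t,\partial_\theta\gamma_t)$, so your sign is off.
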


\begin{proof}
More precisely, this is a corollary of \cite{datta}, Proposition 3.10. We can check that if $D_{K_2}$ differs from $D_{K_1}$ by a move in Figure \ref{fig:lagr_moves} plus area moves, then all the hypotheses of \cite{datta}, Proposition 3.10 are satisfied. Indeed
\begin{itemize}
\item total area is preserved since each move in Figure \ref{fig:lagr_moves} preserves it;
\item all areas of $D_{K_1},D_{K_2}$ are strictly positive;
\item if a disk only has positive (negative) corners, then a sequence of area moves will strictly increase (decrease) its area;
\item if the sequence from $D_{K_1}$ to $D_{K_2}$ includes one move $M$ from Figure \ref{fig:lagr_moves}, it equivalently includes one move $A_0\circ M\circ A_0$ from the list in \cite{datta}.
\end{itemize}
\end{proof}

\begin{rmk}
The results in \cite{lin} stop short of proving the above proposition. Lagrangian moves are built in \cite{lin} by leveraging the work of \cite{Sau04}; a straightforward concatenation of such moves builds an L-cobordism connecting $K_1$ to some link $K_3$ with $D_{K_3}=D_{K_2}$, but in general $K_2\neq K_3$ (there is little control on the $z$-coordinate); this is of course irrelevant if $K_2=\emptyset$.
The stronger version from \cite{datta} above follows from a ``wiggling'' argument that allows control of the $z$-coordinate.
\end{rmk}

We will also need a gluing lemma (cf \cite{datta}, Lemma 2.4):

\begin{lem}\label{lem:glueing}(cf \cite{ELST08},\cite{datta})
Let $L_1,L_2\subset\R^4$ be L-cobordisms, with support $[a,b]$ and $[b,c]$ respectively. Assume $\partial_+L_1 = \partial_-L_2$. Then, for any $\varepsilon>0$ small enough, there exists an L-cobordism $L_2\circ L_1$ supported in $[a,c]$, coinciding with $L_1$ for $t<b-\varepsilon$ and with $L_2$ for $t>b+\varepsilon$. Topologically, $L_2\circ L_1$ is homeomorphic to $L_2\cup L_1$.
\end{lem}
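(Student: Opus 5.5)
The plan is to build $L_2\circ L_1$ by straightening both cobordisms near the common slice $t=b$ and then concatenating them. Write $K=\partial_+L_1=\partial_-L_2\subset\R^3_{v,s,u}$. Near $\{t=b\}$, both $L_1$ and $L_2$ are transverse to the level sets of $t$. So in a collar $t\in(b-\delta,b]$ for $L_1$, and similarly $t\in[b,b+\delta)$ for $L_2$, each is the image of a one-parameter family $t\mapsto K_t$ of knots in $\R^3$ with $K_b=K$.

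The first step is to describe Lagrangian collars concretely. Use the splitting $\omega=dt\wedge dv+ds\wedge du$, and note that $\{t=\text{const}\}\cong\R^3_{v,s,u}$ carries the characteristic direction $\partial_v$. A collar $\{(t,\gamma_t(\theta))\}$ is Lagrangian exactly when the $v$-component of the velocity $\partial_t\gamma_t$ is determined by the family's $(s,u)$-motion: the $ds\wedge du$ flux swept by the curve must be compensated by $dt\wedge dv$. Equivalently, the germ of $L$ along $\partial_+L$ is determined by $K$ together with a transverse $v$-velocity function $f\colon K\to\R$, whose normal-direction constraint is forced by $\omega|_L=0$. The key observation is that any two Lagrangian collars with the same boundary $K$ have equal first-order data up to a $\partial_v$-shift; this is a closed $1$-form condition that I will check.

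The second step is the interpolation. Given the two collars (incoming from $L_1$, outgoing from $L_2$), I will produce a single Lagrangian surface on $[b-\varepsilon,b+\varepsilon]$ that agrees with each near the ends. I will do this by writing both collars, via a Weinstein neighbourhood of the cylinder $[b-\varepsilon,b+\varepsilon]\times K$ (after first making each collar $C^1$-close to a product-like model), as graphs of closed $1$-forms $\sigma_1$, $\sigma_2$ on the cylinder. Transversality to $\{t=b\}$ and the shared boundary make the two forms agree at $t=b$ to zeroth order. I then take $\sigma=\rho(t)\sigma_1+(1-\rho(t))\sigma_2$ with a cutoff $\rho$. Since $d\sigma=\rho'(t)\,dt\wedge(\sigma_1-\sigma_2)$, $\sigma$ is closed provided $\sigma_1-\sigma_2$ restricted to the $K$-direction is exact, and in that case I correct by $d(\rho\, g)$, where $dg=\sigma_1-\sigma_2$ on the slices.

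This is the main obstacle: the difference of the two forms along $K$ might have nonzero period, and making it exact is where genuine care is needed. The cohomology class of $\sigma_i|_{\{t\}\times K}$ measures the $\omega$-flux of the family; because both collars limit to the same $K$ at $t=b$, these classes agree in the limit. Taking $\varepsilon$ small keeps the periods' variation $O(\varepsilon)$, and this can be absorbed by slightly reparametrizing one collar in $t$ (or, equivalently, using the freedom in the Weinstein chart). That is the step I would check most carefully, and I would follow \cite{ELST08}.

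Finally, smallness of $\varepsilon$ ensures that the resulting graph stays embedded, transverse to each $\{t=c\}$ in the interpolation region, and disjoint from the rest of $L_1\cup L_2$. Outside $[b-\varepsilon,b+\varepsilon]$ nothing changes, so the glued surface is a compact, oriented Lagrangian with boundary $K_1\times\{a\}\cup K_2\times\{c\}$, supported in $[a,c]$. Topologically it is $L_1\cup_K L_2$.
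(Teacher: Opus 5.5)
The paper gives no proof of this lemma; it cites \cite{ELST08} and Datta's Lemma 2.4. Your sketch has a genuine gap at exactly the step that carries the content.

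You take a Weinstein neighbourhood of the cylinder $[b-\varepsilon,b+\varepsilon]\times K$, but that cylinder is not Lagrangian: $\omega(\partial_t,\partial_\theta\gamma)=\partial_\theta v$, which vanishes only if $v$ is constant along $K$. The parenthetical ``after first making each collar $C^1$-close to a product-like model'' does not repair this. By your own first step, the tangent planes of $L_1$ and $L_2$ along $K$ are two members of the pencil of Lagrangian planes containing $TK$, labelled by the free $v$-velocities $f_1,f_2$, and in general $f_1\neq f_2$. So no single Lagrangian model is $C^1$-close to both near $K$, and deforming the collars until one is amounts to the gluing problem itself.

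Suppose instead you work over an honest Lagrangian model through $K$, e.g.\ an extension of $L_1$ past $t=b$. You need such an extension anyway, since otherwise $\sigma_1$ and $\sigma_2$ live on opposite sides of $t=b$ and have no common domain. Then $L_2$ is the graph of $dg$ with $g\approx\tfrac12 h(\theta)r^2$, where $r$ is the coordinate transverse to $K$. The cutoff gives $\partial_r^2(\rho g)\approx h\,(\tfrac12\rho r^2)''$, which is invariant under rescaling $\rho(r)=\rho_0(r/\varepsilon)$. So the $C^1$-size of the modification does not shrink with $\varepsilon$, and ``smallness of $\varepsilon$ ensures transversality'' is not a valid argument. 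The interpolating tangent planes run through the pencil from $TL_1$ to $TL_2$, with some overshoot. Whether they avoid its unique non-transverse member $\mathrm{span}(TK,\partial_v)$ depends on the chart. It works if the Weinstein fibres along $K$ contain $\partial_v$, since then that plane is never a graph. Otherwise it can fail no matter how small $\varepsilon$ is.

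The obstacle you single out is, by contrast, not one. Periods of a closed $1$-form on an annulus do not depend on the slice, so there is no $O(\varepsilon)$ variation to absorb, and reparametrizing $t$ could not change a period anyway. The actual reason is that both graphs contain $K$, so $\sigma_1=\sigma_2$ as covectors along the base curve of $K$. Hence $\sigma_1-\sigma_2=dg$ exactly, with $g$ vanishing to second order there. Also, $\rho\sigma_1+(1-\rho)\sigma_2$ is not closed; the right interpolant is $\sigma_2+d(\rho g)=\rho\sigma_1+(1-\rho)\sigma_2+g\,d\rho$.

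Your first step also states the Lagrangian condition backwards. For $\{(t,\gamma_t(\theta))\}$ it reads $\partial_\theta v=\partial_t u\,\partial_\theta s-\partial_t s\,\partial_\theta u$. So it is $\partial_\theta v$ (equivalently, the normal $(s,u)$-velocity along $K$) that is forced, while $\partial_t v$ is free.

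That identity gives the cleanest repair within your slice-family picture. A collar is determined by the planar family $c_t=(s,u)\circ\gamma_t$, which must have constant signed area, together with $t\mapsto v(t,\theta_0)$. The two planar families agree to first order at $t=b$ after reparametrizing $\theta$. So a cutoff interpolation between them is $C^1$-close to each and needs only a small area correction. Then reconstruct $v$ by integrating the identity, interpolating $v(t,\theta_0)$. Transversality is automatic since $t$ is a parameter, and the slices remain embedded for $\varepsilon$ small.
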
 

The topology statement is not explicitly included in Lemma 2.4 of \cite{datta}, but is implied by the same proof.  
Thanks to this, L-cobordisms can be composed. 


\subsection{Lagrangian diagrams of Legendrians}

Given the Lagrangian diagram of a knot $D=D_K\subset \R^2_{x,y}$ with total signed area $0$, we can recover a Legendrian $\Lambda$ by choosing a basepoint $p\in D$ and letting $$ z(q) = z_p + \int_p^q ydx \text{\ \ for all\ \ \ } q\in K. $$

Then $D=D_\Lambda$ (i.e., $D$ is Legendrian) if and only if: at each crossing $c\in D$, $z$ takes a higher value on the top strand compared to the lower one, i.e., the area enclosed by any arc starting at the lower strand and ending at the upper strand is positive (cf \cite{datta}; links behave similarly but have more tricky basepoint choices). This property is stable with respect to small changes in the area function, i.e.:

\begin{lem}\label{lem:leg_diag}
Let $\Lambda$ be a Legendrian knot. If $\varepsilon>0$ is small enough, then any Lagrangian diagram $D=D_K$ such that:
\begin{itemize}
\item $D$ has total area $0$;
\item $D_\Lambda,D$ are isotopic as diagrams in $R^2$;
\item the area functions $\mathcal{A}_D$, $\mathcal{A}_{D_\Lambda}$ differ by less than $\varepsilon$ on each disk;
\end{itemize}
is Legendrian: $D=D_{\Lambda_2}$.
Also, $\Lambda,\Lambda_2$ are Legendrian isotopic.
\end{lem}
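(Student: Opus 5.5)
The plan is to reconstruct the Legendrian lift of $D$ explicitly, check the crossing condition by a continuity estimate, and then produce the isotopy by a straight-line interpolation of diagrams.

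\emph{Constructing the lift.} Fix a basepoint $p$ on $\Lambda$ and the corresponding point $p'$ on $K$ under the isotopy of diagrams. Since $D$ has total area $0$, the formula $z(q)=z_{p'}+\int_{p'}^q y\,dx$ defines a closed Legendrian curve $\Lambda_2$ whose $xy$-projection is the underlying curve of $D$. So the only thing to prove is that $\Lambda_2$ is embedded, i.e.\ that at every crossing of $D$ the $z$-value on the upper strand exceeds the one on the lower strand.

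\emph{The crossing condition.} At a crossing $c$, the height difference $z^+(c)-z^-(c)$ equals the signed area enclosed by the arc of the diagram running from the lower to the upper strand. By Stokes, this area is a signed sum (with coefficients $0,\pm1$, given by winding numbers) of the areas $\mathcal{A}$ of the disks of $\R^2\setminus\Delta$. This sum depends only on the combinatorial diagram, which is shared by $D$ and $D_\Lambda$, and on the values of $\mathcal{A}$.
\begin{itemize}
\item For $\Lambda$, each such quantity $h_c(\Lambda)$ is strictly positive, and there are finitely many crossings. Set $h=\min_c h_c(\Lambda)>0$.
\item Let $N$ be the number of disks. Choose $\varepsilon<h/N$.
\item Then $|h_c(D)-h_c(\Lambda)|\le N\varepsilon<h$, so $h_c(D)>0$ for every $c$.
\end{itemize}
Hence $\Lambda_2$ is an embedded Legendrian with $D=D_{\Lambda_2}$. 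One subtlety is that the diagram of $D$ is only isotopic to that of $D_\Lambda$, not equal. This is harmless, since the crossing heights depend only on the combinatorics and the area function.

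\emph{The isotopy.} First isotope $\Lambda$ by a planar diffeotopy of the diagram so that its projection realizes the same planar curve shape. This is a Legendrian isotopy: each intermediate lift is determined by the same integral formula and stays embedded because its area function is unchanged, so the heights $h_c$ are constant along the diffeotopy. We may therefore assume that $D_\Lambda$ and $D$ differ only by a smooth deformation of the curve changing the areas.

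Now interpolate linearly between the two area functions, $\mathcal{A}_s=(1-s)\mathcal{A}_{D_\Lambda}+s\mathcal{A}_D$, realized by a smooth family of curves $\gamma_s$ with fixed combinatorics. One can achieve this by pushing edges locally, a standard realization argument, since all areas are positive. Every $\mathcal{A}_s$ has total area $0$ and lies within $\varepsilon$ of $\mathcal{A}_{D_\Lambda}$. By the estimate above, each lift $\Lambda_s$ is an embedded Legendrian, so $\{\Lambda_s\}$ is a Legendrian isotopy from $\Lambda$ to $\Lambda_2$.

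\emph{Main obstacle.} The main technical point is realizing the prescribed interpolating areas by a smooth family of planar curves with fixed crossing pattern. I expect this to follow by local modifications supported inside each disk, moving an edge while keeping the total signed area zero.
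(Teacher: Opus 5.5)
Your overall route is the paper's: lift via $z=\int y\,dx$, bound the change in each crossing's height, then use a family of diagrams with linearly interpolated areas $\mathcal{A}_s$. The estimate step, however, rests on a false claim.

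\textbf{The coefficients are not $0,\pm1$.} The loop running from the lower to the upper strand at a crossing $c$ is a closed planar curve. It may pass through other crossings and wind around a disk several times. For instance, if a curl lies inside this loop and turns in the same direction, the curl's disk has winding number $2$; nested curls give larger values. So the coefficients are arbitrary integers $w_c(\delta)$. The triangle inequality then only gives $|h_c(D)-h_c(\Lambda)|\le\varepsilon\sum_\delta|w_c(\delta)|$.

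This sum can exceed $N$. Adding $k$ nested same-direction curls to a fixed diagram makes $\sum_\delta|w_c(\delta)|$ grow like $k^2$, while $N$ grows only like $k$. Hence your choice $\varepsilon<h/N$ does not guarantee $h_c(D)>0$. The fix is to take $\varepsilon<h/\max_c\sum_\delta|w_c(\delta)|$. This is exactly the paper's choice: its $n_c$ counts disks with multiplicity $|w_c(\delta)|$.

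\textbf{The first isotopy step.} A planar diffeotopy leaves the area function unchanged only if it is area-preserving on each disk. Also, it cannot make $D_\Lambda$ realize the same planar curve as $D$ when their areas differ. The step is unnecessary anyway. Take any planar isotopy from $D_\Lambda$ to $D$ and correct it so that the areas are exactly $\mathcal{A}_s$; this is the same realization argument you already invoke for the interpolation, and the paper uses it implicitly. Choose the basepoint and $z$-value smoothly in $s$ so that the lifts form a Legendrian isotopy.
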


\begin{proof}
At each crossing $c\in D_\Lambda$, let $\Delta z(c)=z_{t}(c)-z_{b}(c)$ be the height difference from the top to the bottom strand. Denote by $\mathcal{H}_{D_\Lambda}=\{\Delta z(c) \}$ the set of all such heights. For each crossing $c$ consider the oriented arc starting at the lower strand at $c$ and ending at the upper strand. Let $n_c$ be the number of disks that compose the area enclosed by the arc (counted with multiplicity given by the absolute value of the winding number on each disk). Let $\varepsilon <\frac{\min (\mathcal{H}_{D_\Lambda})}{\max_c n_c}$.
If $D$ has total area $0$, pick a basepoint and lift $D$ to $\Lambda_2$ via $$ \zeta(q) = \zeta_p + \int_p^q ydx \text{\ \ for all\ \ \ } q\in K. $$
We need to check that, at each crossing, the total area $\alpha_c$ enclosed by the arc starting at the lower strand at $c$ and ending at the upper strand is positive. The area $\alpha_c$ is given by a signed count of $\mathcal{A}_D(\delta)$ over $n_{c}$ disks. Since $|\mathcal{A}_{D_\Lambda}-\mathcal{A}_{D}|<\varepsilon$, we have $|\alpha_c-\Delta z(c)|<n_{c}\varepsilon <\min (\mathcal{H}_{D_\Lambda})$, so $\alpha_c>0$ as needed.
Consider the isotopy $\{D_\tau\}_{0\leq\tau\leq 1}$ from $D$ to $D_\Lambda$; let $\mathcal{A}_\tau=(1-\tau)\mathcal{A}+\tau\mathcal{A}_\Lambda$. Then $D_\tau$ still satisfies $\alpha_c>0$ for all crossings $c\in {D_\tau}$. Lift each $D_\tau$ to the Legendrian $\Lambda_\tau$. If the base point $p$ and value $z_p$ are chosen smoothly with respect to $\tau$, $\{\Lambda_\tau\}_{0\leq\tau\leq 1}$ is a Legendrian isotopy from $\Lambda_2$ to $\Lambda$.
\end{proof}

\section{Lagrangian cobordisms from Lagrangian moves}

Lagrangian moves induce L-cobordisms (cf Proposition \ref{prop:lagr_move}). We need to bridge the gap to obtain Lagrangian cobordisms in the symplectization.
We will show:

\begin{prop}\label{thm:leg_extension}
Let $\Lambda_1,\Lambda_2\subset \R^3$ be Legendrian knots such that the Lagrangian diagram $D_{\Lambda_2}$ can be obtained from $D_{\Lambda_1}$ via a sequence of orientation-compatible Lagrangian moves as in Figure \ref{fig:lagr_moves}. There exists a weak Lagrangian cobordism $L:\Lambda_1\prec^w\Lambda_2$, with the smooth topology specified by the sequence of moves.
\end{prop}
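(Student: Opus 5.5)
The plan is to build an L-cobordism in $\R^4$ out of the moves, and then transport it into the symplectization by an explicit symplectic embedding. The L-cobordism must look like a piece of a trivial cylinder $\R\times\Lambda_i$ near each boundary.

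\textbf{Step 1: build the L-cobordism.} Write the sequence of moves as $D_{\Lambda_1}=D_0\to D_1\to\dots\to D_n=D_{\Lambda_2}$. For each $i$ choose a smooth link $K_i$ with $D_{K_i}=D_i$, taking $K_0=\Lambda_1$ and $K_n=\Lambda_2$. The intermediate $K_i$ need not be Legendrian and need not have total area $0$. Apply Proposition~\ref{prop:lagr_move} to each step, after interspersing small area moves around the crossings; these can be absorbed as $\varepsilon$-perturbations of the areas. This gives L-cobordisms $L_i:K_{i-1}\triangleleft K_i$ with consecutive supports. Concatenate them using Lemma~\ref{lem:glueing} to get $L:\Lambda_1\triangleleft\Lambda_2$ in $\R^4$. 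Its topology is the union of the elementary pieces, i.e.\ the one prescribed by the sequence of moves.

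\textbf{Step 2: identify $\R^4$ with the symplectization.} Use an explicit symplectomorphism between a region of $(\R^4,dt\wedge dv+ds\wedge du)$ and $(\R\times\R^3,d(e^t\alpha))$. For example, take coordinates $(r=e^t,x,y,z)$ with $d(r\,dz-ry\,dx)=dr\wedge dz+dx\wedge d(ry)$. Matching the conventions of Definition~\ref{def:LcobR4}, identify $(s,u)$ with $(x,\,ry)$ (or with $(x,y)$ after rescaling) and $(t,v)$ with $(r,z)$ up to sign. Under this identification the slice $\{r=\text{const}\}$ projects to the $(x,y)$-plane with areas scaled by a constant factor. The cylinder $\R_r\times\Lambda$ over a Legendrian is Lagrangian for $d(r\alpha)$, so in the $\R^4$ picture it corresponds to a Lagrangian whose level sets are the rescaled curves $(x,ry,z)$.

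\textbf{Step 3: make the ends cylindrical.} This is the main obstacle. In general the boundary of an L-cobordism meets $\{t=a\}$ and $\{t=b\}$ only transversally; it is not cylindrical in the symplectization sense. I would handle this by attaching collars. Near the bottom, replace a neighborhood of $\partial_-L$ by a piece of the image of $(-\infty,T_-]\times\Lambda_1$, and similarly near the top. Equivalently, first prepend and append to $L$ the trivial L-cobordisms given by the images of $[T,T']\times\Lambda_i$. These are L-cobordisms between rescalings of $\Lambda_i$, and the rescaling multiplies all areas by a factor close to $1$.

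Two points need care here. First, the gluing in Lemma~\ref{lem:glueing} requires equal boundaries. I would achieve this by choosing the scale in the construction of Step 1 to match: perform all moves in the rescaled diagram, since the moves are invariant under uniform scaling of areas, and use Lemma~\ref{lem:leg_diag} to absorb small area discrepancies into a Legendrian isotopy. That isotopy is realized by a Lagrangian cylinder (\cite{chantraine2010concordance}). Second, the gluing must be done so that the result is exactly cylindrical outside a compact set. This holds because Lemma~\ref{lem:glueing} leaves each piece unchanged away from an $\varepsilon$-neighborhood of the gluing level.

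The result is an orientable Lagrangian $L\subset\R\times\R^3$ with cylindrical ends over $\Lambda_1$ and $\Lambda_2$, i.e.\ a weak Lagrangian cobordism $\Lambda_1\prec^w\Lambda_2$. Exactness is not claimed, since the moves $A_1$ can change the action. Orientation-compatibility of the moves guarantees that $L$ is orientable.
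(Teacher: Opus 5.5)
Your architecture matches the paper's. You build an L-cobordism from Proposition~\ref{prop:lagr_move} and Lemma~\ref{lem:glueing}, pull it back to the symplectization with cylindrical collars, and absorb the final discrepancy using Lemma~\ref{lem:leg_diag} and \cite{chantraine2010concordance}. The gap is in Step 1, which is where the paper's proof does its real work.

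\begin{itemize}
\item You cannot pick $K_i$ with $D_{K_i}=D_i$. The formal diagrams may contain zero-area disks, which the paper explicitly allows (e.g.\ formal zero-area stabilization loops), and no smooth link realizes those.
\item Once area moves are interspersed, the diagrams are no longer the $D_i$ at all. You repair the endpoint in Step 3 with Lemma~\ref{lem:leg_diag}, but you never check that the \emph{perturbed} intermediate diagrams still admit the remaining moves of the sequence.
\end{itemize}

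The paper handles this in two ways. First, after each move it applies an area move of weight $\delta$ at every crossing. Here $\delta$ is chosen infinitesimal relative to all nonzero areas, all nonzero area differences, the $H_{1,2}$ area parameters, and $\varepsilon/(nC)$, where $\varepsilon$ is the threshold of Lemma~\ref{lem:leg_diag} for $D_{\Lambda_2}$. This makes all areas positive, so Proposition~\ref{prop:lagr_move} applies at each step. It also preserves the strict inequalities the sequence relies on and keeps the accumulated error below $\varepsilon$. Second, it re-tunes the area parameter of each splitting move $H_1/H_2$ so the resulting components have exactly their intended total areas, which keeps later caps applicable. Saying the perturbations ``can be absorbed as $\varepsilon$-perturbations'' skips exactly this verification.

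Step 3 is less of an obstacle than you expect, but your identification in Step 2 makes it harder. Under $(t,x,y,z)\mapsto(e^t,z,x,e^ty)$, the slice $\{t'=r\}$ carries the contact form $dv-r^{-1}u\,ds$. So the top boundary of your L-cobordism must be the slice $\{(z,x,r_1y)\}$ of a cylinder, i.e.\ Legendrian for a form that depends on the top level $r_1$ of the support. You would have to fix that knot before invoking Proposition~\ref{prop:lagr_move} for the last move, yet that proposition gives no control over the support.

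The paper's map $\beta(t,x,y,z)=(t,e^tz,x,e^ty)$ avoids this. By (\ref{contactomorphic}), it restricts on every slice to a contactomorphism onto $(\R^3,\ker(dv-u\,ds))$. Hence an L-cobordism between standard Legendrians $K_1,K_2$ with \emph{any} support $[a,b]$ extends by cylinders to $K_1^{-a}\prec^w K_2^{-b}$ (Lemma~\ref{lem:cobordisms}(3)), and these expansions are Legendrian isotopic to $K_1,K_2$. With $\beta$ in place of your map, the collar step takes a few lines.
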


For concordances, we can recover exactness from the underlying topology, and thus drop the \textit{weak} qualifier:
\begin{cor}\label{cor:lagr_conc}
Let $\Lambda_1,\Lambda_2\subset \R^3$ be Legendrian knots such that $D_{\Lambda_2}$ can be obtained from $D_{\Lambda_1}$ via Lagrangian moves. If the underlying smooth cobordism from $\Lambda_1$ to $\Lambda_2$ is a concordance, there exists a Lagrangian concordance $\Lambda_1\prec\Lambda_2$. 
\end{cor}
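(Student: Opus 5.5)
The plan is to apply Proposition \ref{thm:leg_extension} first, and then show that the resulting weak cobordism is automatically exact when its smooth topology is a concordance. The proposition already provides a weak Lagrangian cobordism $L:\Lambda_1\prec^w\Lambda_2$ whose underlying smooth surface is the one dictated by the sequence of moves. By hypothesis this surface is a concordance, so $L\cong \R\times S^1$, an annulus. What remains is to check that the closed $1$-form $\lambda|_L$, with $\lambda=e^t\alpha$, is exact. We must also check that a primitive can be chosen constant on both cylindrical ends. For knots the leveledness property is automatic, as noted after Definition \ref{def:Lcob}.

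Because $L$ is Lagrangian, $d(\lambda|_L)=\omega|_L=0$, so $\lambda|_L$ defines a class in $H^1(L;\R)\cong\R$. Since $L$ is an annulus, this group is generated by the class dual to a single loop. We may take that loop to be $\gamma=\{t_0\}\times\Lambda_1$ for any $t_0<T_-$, which lies in the cylindrical lower end. It therefore suffices to show that $\int_\gamma\lambda=0$.

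On the lower end, $L$ coincides with $\R\times\Lambda_1$ and $\lambda=e^{t}\alpha$. Hence $\int_\gamma\lambda=e^{t_0}\int_{\Lambda_1}\alpha$. This integral vanishes because $\Lambda_1$ is Legendrian: $\alpha|_{\Lambda_1}\equiv0$. Equivalently, $\int_{\Lambda_1}y\,dx=0$, which is the statement that the total signed area of $D_{\Lambda_1}$ is zero. The class of $\lambda|_L$ is therefore zero, so $\lambda|_L=df$ for some function $f$ on $L$.

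On each end, $\lambda$ restricts to zero along the slices $\{t\}\times\Lambda_i$, and it also vanishes in the $\partial_t$ direction, since $\alpha(\partial_t)=0$. So $\lambda|_L\equiv0$ on both cylindrical ends, and $f$ is locally constant there. Each end is connected, so $f$ is constant on each end. This gives leveledness, which is in any case automatic here. Thus $L$ is exact, orientable, and has cylindrical ends, so it is a Lagrangian concordance $\Lambda_1\prec\Lambda_2$ in the sense of Definition \ref{def:Lcob}.

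The only step needing care is the claim that $H^1(L)$ is generated by a loop inside an end. This holds because $L$ deformation retracts onto any slice $\{t_0\}\times\Lambda_1$, which in turn follows from the topology provided by Proposition \ref{thm:leg_extension}. The rest is formal.
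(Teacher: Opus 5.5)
Your proposal is correct and follows essentially the same route as the paper. You apply Proposition~\ref{thm:leg_extension} to get a weak cobordism, note that the annulus has first (co)homology generated by a Legendrian slice on which $e^t\alpha$ integrates to zero, conclude that $L$ is exact, and observe that leveledness is automatic for knots. Your explicit check that $\lambda|_L\equiv 0$ on the cylindrical ends just spells out the leveledness remark the paper states in one line.
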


\begin{proof}[Proof of corollary]
Apply Proposition \ref{thm:leg_extension} to build a weak Lagrangian cobordism $L:\Lambda_1\prec^w\Lambda_2$.  
Since $L$ is a concordance, its fundamental group is generated by $\Lambda_1$ (or $\Lambda_2$). Since  $\Lambda_{1,2}$ are Legendrian, $e^t\alpha$ does not have monodromy around them, thus $L$ is exact. Given exactness, the leveledness property is well-defined. It is also trivially true since $\Lambda_{1,2}$ are knots.
\end{proof}

Before proving Proposition \ref{thm:leg_extension}, we delve into the correspondence between the standard symplectic space $\R^4$ and the symplectization space $\R\times \R^3$.
There are multiple ways to identify these spaces; we choose: $$\beta:\R_t\times\R^3_{x,y,z}\rightarrow \R^4_{t,v,s,u}\ \ \ \ \ \ \ \beta(t,x,y,z)=(t, e^tz, x, e^ty).$$ 
Our choice differs slightly from the choices of both \cite{lin} and \cite{datta}, but it is the most natural for our purpose. Restricting to $t$-slices we have: 
\begin{equation}\label{contactomorphic}
\beta|_{t=a}:\{a\}\times\R^3_{x,y,z}\rightarrow \{a\}\times\R^3_{v,s,u} \text{\ \ \ \ \ \ } (\beta^{-1})^*{\alpha} = e^{-a}(dv-uds).
\end{equation}

This behaves nicely with respect to the Legendrian property. Define:
\begin{defn}
Given a Legendrian link $\Lambda\subset(\R^3_{x,y,z},dz-ydx)$, and $\varepsilon\in\R$, the \textbf{$\varepsilon$-expansion} of $\Lambda$ is the link $\Lambda^\varepsilon=\{(x,e^\varepsilon y,e^\varepsilon z)| (x,y,z)\in\Lambda\}$.
\end{defn}
Given $\Lambda$, all its $\varepsilon$-expansions are Legendrian isotopic (for any $\tau$, $d(e^\tau z)-e^\tau y dx = e^\tau (dz-ydx) =0$, so $\{\Lambda^\tau\}_{\varepsilon_1\leq \tau\leq \varepsilon_2}$ is a Legendrian isotopy). We have:
\begin{lem}\label{lem:cobordisms}
\begin{enumerate}
\item The map $\beta$ is an exact symplectomorphism;
\item If $L:\Lambda_1\prec\Lambda_2$ is a Lagrangian cobordism with support $[t_1,t_2]$, then $\beta(L|_{[t_1,t_2]})$ is an L-cobordism $\Lambda_1^{t_1}\triangleleft\Lambda_2^{t_2}$;
\item If $\mathcal{L}:K_1\triangleleft K_2$ is an L-cobordism supported on $[a,b]$, and $K_1,K_2$ are Legendrians in $(\R^3,dv-uds)$, then $\beta^{-1}(\mathcal{L})$ yields a weak Lagrangian cobordism $K_1^{-a}\prec^w K_2^{-b}$ supported on $[a-\varepsilon,b+\varepsilon]$, for any $\varepsilon>0$.
\end{enumerate}
\end{lem}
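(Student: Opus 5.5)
The three items are direct coordinate checks on $\beta(t,x,y,z)=(t,e^tz,x,e^ty)$; I will prove them in order.

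For item 1, I compute the pullback of the Liouville form $v\,dt + s\,du$ (or any primitive of $\omega=dt\wedge dv+ds\wedge du$) and compare with $e^t\alpha$. The cleaner computation is on the two-form:
\[
\beta^*(dt\wedge dv + ds\wedge du)= dt\wedge d(e^tz) + dx\wedge d(e^ty) = e^t\,dt\wedge dz + e^t\,dx\wedge dy + e^t y\, dx\wedge dt .
\]
This equals $d(e^t(dz-ydx)) = e^t dt\wedge dz - e^t y\,dt\wedge dx - e^t dy\wedge dx$, so $\beta$ is a symplectomorphism. For exactness I pick the primitive $\lambda=-v\,dt - u\,ds$ of $\omega$. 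Its pullback is
\[
\beta^*\lambda = -e^tz\,dt - e^ty\,dx = e^t\alpha - d(e^tz),
\]
so $\beta^*\lambda - e^t\alpha$ is exact. Since $\beta$ is a global diffeomorphism, this proves item 1.

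For item 2, I first note that $\beta$ preserves the $t$-coordinate. Hence $\beta(L|_{[t_1,t_2]})$ is a compact Lagrangian (by item 1) contained in $\{t_1\le t\le t_2\}$. On the slice $t=a$, $\beta$ sends $\{a\}\times\Lambda$ to $\{a\}\times\{(x,e^az,e^ay)\}$. In the coordinates $(v,s,u)$ this is exactly the $a$-expansion $\Lambda^a$, after identifying $(x,y,z)\mapsto(s,u,v)$, which matches \eqref{contactomorphic}. Because $L$ is cylindrical near $t_1$ and $t_2$, it meets those slices transversally, and its boundary pieces are $\Lambda_1^{t_1}$ and $\Lambda_2^{t_2}$. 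Orientability and properness are inherited from $L$.

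Item 3 is where the work lies. $\beta^{-1}(\mathcal L)$ is a compact Lagrangian whose boundary slices are $K_1^{-a}$ at $t=a$ and $K_2^{-b}$ at $t=b$. However, it is not cylindrical near its ends, so I must attach cylindrical ends. The key observation is the following. The cylinder $\R\times K^{-a}$ in the symplectization maps under $\beta$ to $\{(t, e^{t-a}v, s, e^{t-a}u)\}$. This is a Lagrangian whose $t=a$ slice is $K$. So I will glue $\beta(\,(a-\varepsilon,a]\times K_1^{-a})$ below $\mathcal L$ and $\beta([b,b+\varepsilon)\times K_2^{-b})$ above it, using Lemma \ref{lem:glueing} in $\R^4$.

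To apply the lemma I need supports meeting exactly, say $[a-\varepsilon,a]$ and $[a,b]$, with matching boundaries. This holds by the slice computation above. The lemma then produces an L-cobordism on $[a-\varepsilon,b+\varepsilon]$. This L-cobordism coincides with the cylinder images for $t<a-\varepsilon'$ and $t>b+\varepsilon'$, for small $\varepsilon'<\varepsilon$. Pulling it back by $\beta^{-1}$ and extending by the full cylinders $(-\infty,a-\varepsilon']\times K_1^{-a}$ and $[b+\varepsilon',\infty)\times K_2^{-b}$ gives a Lagrangian in $\R\times\R^3$ that is cylindrical outside a compact set. This is a weak Lagrangian cobordism $K_1^{-a}\prec^w K_2^{-b}$, supported in $[a-\varepsilon,b+\varepsilon]$ as claimed. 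Exactness is not asserted, because $\mathcal L$ need not be exact.

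The main obstacle I expect is the gluing step. Lemma \ref{lem:glueing} is stated for L-cobordisms with transverse boundary, and the cylinder pieces do satisfy this, since $\partial_t$ of $t$ is nonzero along them. The thing to check carefully is that the glued surface really agrees with the $\beta$-image of the cylinder on a whole neighborhood below $a-\varepsilon'$. That is what allows the extension to $-\infty$ to be smooth, and it is exactly the "coinciding with $L_1$ for $t<b-\varepsilon$" clause of Lemma \ref{lem:glueing}.
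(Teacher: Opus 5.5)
Your proposal is correct and follows the paper's proof essentially step for step. It uses the same primitive $\lambda=-v\,dt-u\,ds$ with $\beta^*\lambda=e^t\alpha-d(e^tz)$ for item 1, the same slice identification $\beta(\{a\}\times\Lambda)=\Lambda^a$ for item 2, and the same gluing of the $\beta$-images of short cylinders onto $\mathcal L$ via Lemma~\ref{lem:glueing} for item 3; your explicit extension by the half-infinite cylinders simply spells out what the paper calls cylindrical collars. The only slip is the parenthetical ``Liouville form $v\,dt+s\,du$'', which is not a primitive of $\omega$ (its differential is $-dt\wedge dv+ds\wedge du$), but you never use it.
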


\begin{proof}
1. Fix a primitive $\lambda=-vdt-uds$ on $\R^4_{t,v,s,u}$ (the choice of $\lambda$ doesn't matter, since $d\lambda_1=d\lambda_2 =\omega\Rightarrow \lambda_1-\lambda_2 = d\theta$ for any two choices in $\R^4$). Then $$\beta^*(\lambda)=
-e^tzdt-e^tydx = e^t(\alpha)-d(e^tz).$$

2. Let $L$ be a Lagrangian cobordism; the cylindrical ends ensure that $L\pitchfork \{t=t_1,t_2\}$; the Lagrangian property and transversality are preserved by $\beta$, so $\beta(L|_{[t_1,t_2]})$ is an L-cobordism and $\beta(\{t_1\}\times \Lambda_1) = \Lambda_1^{t_1}$, $\beta(\{t_2\}\times \Lambda_2) = \Lambda_2^{t_2}$.

3. Let $\mathcal L$ be an L-cobordism supported in $[a,b]$; consider $\beta([a-\varepsilon,a]\times K_1^{-a}),  \beta([b,b+\varepsilon]\times K_2^{-b})$: these are L-cobordisms as well (see part 2). They agree with $\mathcal{L}$ for $t=a,b$, so the gluing Lemma \ref{lem:glueing} yields $\bar{\mathcal L}=\beta([b,b+\varepsilon]\times K_2^{-b})\circ\mathcal{L}\circ\beta([a-\varepsilon,a]\times K_1^{-a})$. Then, $\beta^{-1}(\bar{\mathcal{L}})$ is Lagrangian and has cylindrical collars, thus it is a weak Lagrangian cobordism $K_1^{-a}\prec_w K_2^{-b}$.
\end{proof}

We are ready to prove the proposition.
\begin{proof}[Proof of Proposition \ref{thm:leg_extension}]
Assume that $D_{\Lambda_2}$ can be obtained from $D_{\Lambda_1}$ via a sequence $\mathcal{S}$ of $n$ Lagrangian moves. Proposition \ref{prop:lagr_move} can't be applied directly, as it requires additional area moves.
We will then tweak the diagrams in the following way. Pick $\varepsilon$ small enough for Lemma \ref{lem:leg_diag} to hold for $D_{\Lambda_2}$. Starting at $D_{\Lambda_1}$, apply the first move in the sequence $\mathcal{S}$. Let $C$ be the resulting number of crossings; choose $\delta>0$, infinitesimal with respect to: any non-zero area, non-zero difference of areas in the diagram, any area parameters for $H_{1,2}$ in the sequence $\mathcal{S}$, and the quantity $\frac{\varepsilon}{nC}$.
Apply an area move of weight $\delta$ at each crossing, to obtain a new diagram $D_{\Lambda'_1}$ (with non-zero areas). Repeat the procedure for the next move, starting from $D_{\Lambda_1'}$. Repeat: for each move in $\mathcal{S}$, choose an appropriate $\delta$ and apply more area moves.
We need to check that adding these area moves does not disrupt the sequence, i.e. area equalities are preserved when needed. Let $H$ be a move (of type $H_1$ or $H_2$) that separates two diagram components in such a way that the subsequent total areas are $B_1,B_2$ on the two resulting components. Since $\delta$'s are infinitesimal, the total areas on the two sides are almost unchanged after adding the extra area moves; the area parameter in $H$ can then be adjusted to still obtain exactly total areas $B_1,B_2$. This ensures that subsequent caps can be applied, since the total area of each component can be controlled exactly.

The final resulting diagram $D$ coincides with $D_{\Lambda_2}$ in $\R^2$, and the choice of $\delta$ appropriately small ensures $|\mathcal{A}_D-\mathcal{A}_{D_{\Lambda_2}}|<\varepsilon$ on each disk. Therefore, via Lemma \ref{lem:leg_diag}, $D=D_{\Lambda_3}$ for some $\Lambda_3$ Legendrian isotopic to $\Lambda_2$.
Apply Proposition \ref{prop:lagr_move} to each Lagrangian move and corresponding area moves. The resulting L-cobordisms can be concatenated (via Lemma \ref{lem:glueing}) into $L:\Lambda_1\triangleleft \Lambda_3$.
Let $[a,b]$ be the support of $L$. According to Lemma \ref{lem:cobordisms}(3), there exists a weak Lagrangian cobordism $\Lambda_1^{-a}\prec^w \Lambda_3^{-b}$.
Since $\Lambda_1^{-a},\Lambda_1$ are isotopic, and so are $\Lambda_3^{-b},\Lambda_3,\Lambda_2$, we get $L:\Lambda_1\prec^w\Lambda_2$ (Legendrian isotopic knots are Lagrangian concordant, cf \cite{chantraine2010concordance}).

The smooth topology of $L$ can then be recovered from the individual moves, thanks to Lemma \ref{lem:glueing}.
\end{proof}

\section{Diagram conversion}\label{sec:front_diagrams}

A front diagram can always be converted into a Lagrangian diagram: evaluate $y=\frac{dz}{dx}$, plot the result in the $xy$-plane, compute the areas. This can be cumbersome. There is a known shortcut to obtain a Lagrangian projection diagram from a front: smooth all left cusps, add a loop to each right cusp (cf \cite{ng2003computable}). This method, however, does not accurately represent the areas, so it won't be useful here.
We instead use the following (again, cf \cite{ng2003computable}):
\begin{defn}\label{def:front_diagrams}
A \textbf{linear front diagram} is a piecewise linear curve in $\R^2$ such that:
\begin{itemize}
\item vertices appear at distinct $x$ values;
\item each segment has a distinct, non-vertical slope.
\end{itemize}
\end{defn}

\begin{figure}[htbp]
    \centering
    \makebox[\textwidth][c]{%
        \includegraphics[width=.8\linewidth]{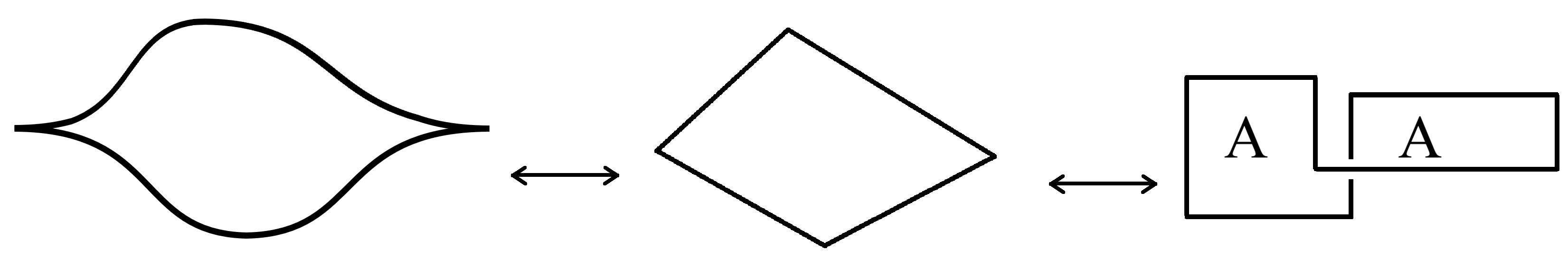}
    }
    \caption{Linearized front and corresponding Lagrangian diagram of an unknot.}
    \label{fig:linear_unknot}
\end{figure}

Given a front diagram, we can always find piecewise linear approximations. Conversely, given a linear front, we can recover a front diagram by applying smoothings close to each vertex, so that the tangent is horizontal at the vertices.

Linear front diagrams are then easily converted into Lagrangian diagrams: all Reeb chords appear at the vertices, and all areas are built out of rectangles. 
See Figure \ref{fig:linear_unknot} for a simple example.
A linear front diagram of the max-tb Legendrian $m(9_{46})$ knot, with its Lagrangian conversion, is shown in Figure \ref{fig:946}.





\subsection{Stabilizations}\label{sec:stabilizations}

\begin{figure}[!htbp]
    \centering
    \makebox[\textwidth][c]{%
        \includegraphics[width=1\linewidth]{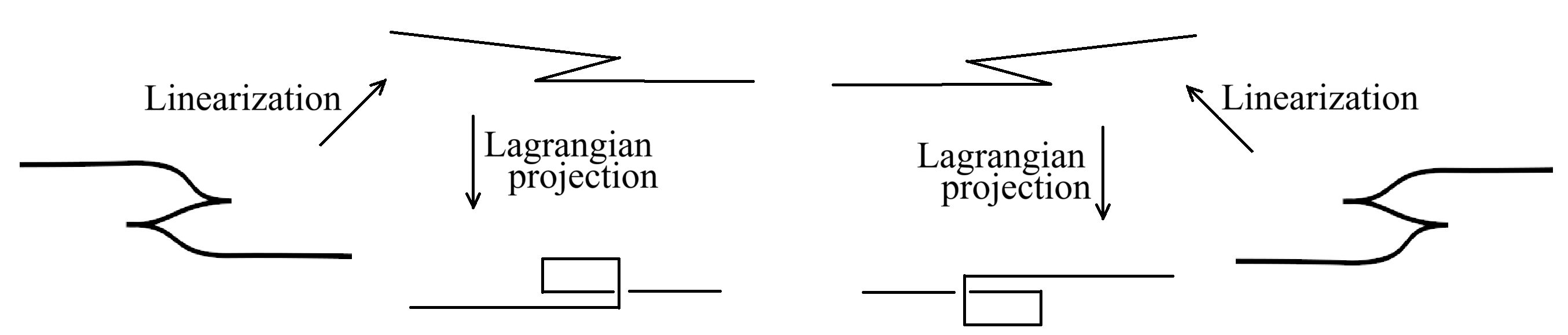}
    }
    \caption{Stabilizations.}
    \label{fig:stabilizations}
\end{figure}
As a warmup, we apply linear diagrams to stabilizations. Figure \ref{fig:stabilizations} shows that stabilizations correspond to loops: a positive stabilization adds a loop on the right side of the oriented strand, a negative stabilization on the left side.
The loop always has a negative crossing. In fact, it is not possible to add a positive loop while preserving the Legendrian property (cf \cite{datta}).
Stabilizations can be arbitrarily small, so a Lagrangian diagram can be formally stabilized by adding a loop with area $0$.

\section{Inverting decomposable moves}\label{sec:inversions}

We now explore the question: given a decomposable Lagrangian cobordism $L$, can we build `inverse' moves to obtain a Lagrangian cobordism in the opposite $t$ direction, smoothly isotopic to $-L$?

Legendrian Reidemeister moves can be inverted with respect to the symplectization coordinate. Elementary handle attachments, on the other hand, happen in a specific $t$ direction.
The goal is thus to invert decomposable handles via (decomposable or Lagrangian) moves connecting the links in the opposite direction. We focus on two main study cases: $1$-handles within a decomposition that doesn't include $0$-handles, or $1$-handles within a concordance. The general case can then be approached with a blend of these techniques.

\subsection{Cobordisms without $0$-handles}
\begin{figure}[!h]
        \centering
        \includegraphics[width=.5\linewidth]{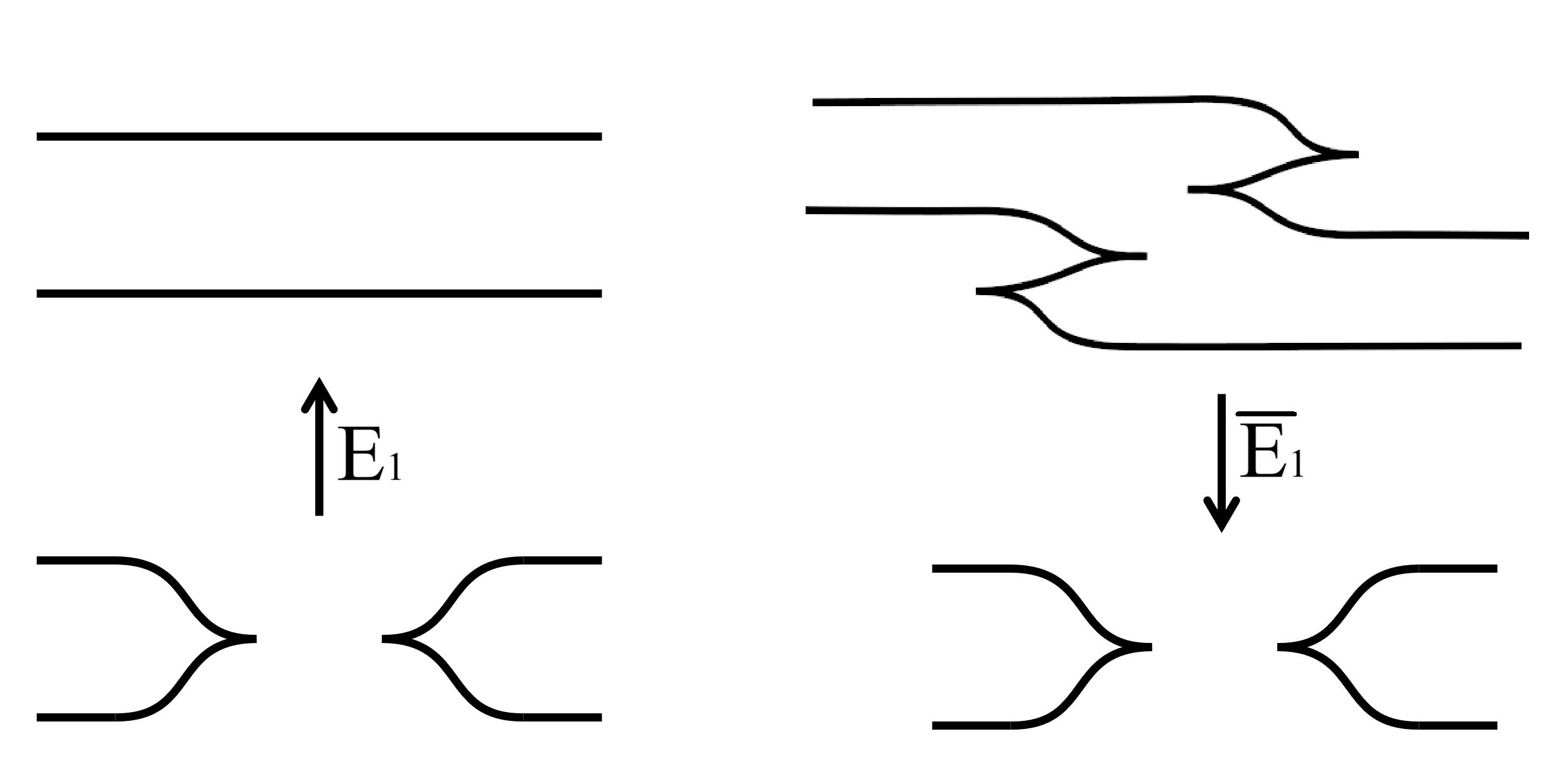}
    \caption{A decomposable $1$-handle attachment in the reverse direction of $E_1$.}
    \label{fig:E1_inv}
\end{figure}

Let $L:\Lambda_1\prec \Lambda_2$ be a decomposable cobordism. If $L^r: {\Lambda_2}^r\prec \Lambda_1$ is isotopic to $-L$, the Thurston-Bennequin numbers must satisfy: $$tb(\Lambda_1) - tb({\Lambda_2}^r) = -\chi(L^r) = -\chi(L) = tb(\Lambda_2) - tb(\Lambda_1).$$
 Thus $tb(\Lambda_2)-tb({\Lambda_2}^r) = -2\chi(L)$, i.e. we need to add $-2\chi(L)$ stabilizations to $\Lambda_2$ (specifically, $-\chi(L)$ positive and $-\chi(L)$ negative stabilizations, to preserve total rotation).
In the absence of $0$-handles, this is enough:
\begin{lem}
Let $L:\Lambda_1\prec \Lambda_2$ be a decomposable cobordism that admits a decomposition without $0$-handles. Then there is a decomposable cobordism 
$$L^r: {S_+}^n({S_-}^n(\Lambda_2))\prec \Lambda_1$$
where $n=-\chi(L)$ and $L^r$ is smoothly isotopic to $-L$.
\end{lem}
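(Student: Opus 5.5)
Since $L$ admits a decomposition without $0$-handles, we write it as a sequence of Legendrian isotopies (Reidemeister moves $R_1,R_2,R_3$) and $k$ elementary $1$-handle attachments $E_1$. Then
$$\chi(L) = -k,\qquad\text{so}\qquad n=k.$$
The plan is to reverse the sequence move by move. Each Legendrian isotopy is invertible in the $t$ direction, since Legendrian isotopic links are Lagrangian concordant in both directions. Each $1$-handle $E_1$ is replaced by a decomposable \emph{reverse} $1$-handle, as depicted in Figure \ref{fig:E1_inv}, at the cost of one positive and one negative stabilization. Concatenating the resulting decomposable pieces in reverse order then gives $L^r$.

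\textbf{Key local step.} Suppose $E_1$ takes a front $\Lambda$ to $\Lambda'$, where in $\Lambda'$ two oppositely oriented horizontal-ish strands appear in place of the pair of cusps that $E_1$ resolved. We show that $S_+S_-(\Lambda')$ admits a decomposable $1$-handle to a link Legendrian isotopic to $\Lambda$. First, by Legendrian Reidemeister moves, we push one zigzag (stabilization) onto each of the two strands, choosing the signs so that one is positive and one is negative, as orientations require. Then the two zigzags can be arranged into a configuration containing a pair of facing cusps, as in Figure \ref{fig:E1_inv}. An $E_1$ applied there yields a front which, after Reidemeister moves, should be isotopic to $\Lambda$.

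\textbf{Checks on the local step.}
\begin{itemize}
\item The count $tb(\Lambda)=tb(S_+S_-\Lambda')+1 = tb(\Lambda')-1$ matches the identity before the lemma.
\item The rotation number is unchanged, consistent with the result.
\item Topologically, this piece is the $1$-handle dual to the original one, i.e.\ the original handle viewed upside down.
\end{itemize}

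\textbf{Moving stabilizations through the sequence.} We go backwards from $\Lambda_2$. Stabilizations commute with Legendrian isotopies and can be slid anywhere along a component, so we may add all $n$ positive and $n$ negative stabilizations to $\Lambda_2$ at the start. We then carry them down through the reversed sequence: Reidemeister moves act away from them, and at each reversed handle one $S_+$ and one $S_-$ are consumed. Because there are no $0$-handles, we never have to reverse a $0$-handle, which would require a cap, a non-decomposable operation. Orientability and the component structure are respected because the reverse handle is attached along the same arc as the dual handle. Gluing the pieces (decomposable cobordisms concatenate) yields $L^r:S_+^nS_-^n(\Lambda_2)\prec\Lambda_1$, smoothly isotopic to $-L$, since each piece is smoothly the reverse of the corresponding piece of $L$.

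\textbf{Expected obstacle.} The main difficulty is the local front computation. We must verify that, after the reverse $E_1$, the two stabilizations are genuinely absorbed, giving $\Lambda$ itself and not a stabilized $\Lambda$. We must also check that the sign assignment (one $S_+$ and one $S_-$ per handle) works whatever the orientations of the two strands, including the case where the handle joins two different components. I would first check this explicitly on the standard local model of $E_1$ with both orientation choices, using Reidemeister moves to bring the zigzags into a facing-cusp position.
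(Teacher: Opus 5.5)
Your proposal follows the paper's proof: run the isotopies of $L$ backwards and replace each $E_1$ by the decomposable reverse handle $\bar{E_1}$ of Figure~\ref{fig:E1_inv}, with each replacement consuming one $S_+$ and one $S_-$, so that $n=k=-\chi(L)$ stabilizations of each sign are needed. The local check you flag does go through: the two zigzags must step in the same vertical direction for the pinch to reconnect left ends with left ends and right ends with right ends, and since the two strands are oppositely oriented this is automatically one positive and one negative stabilization.
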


\begin{proof}
Follow all the isotopies of $L$ in reverse, and replace each handle attachment $E_1$ with the decomposable move $\bar{E_1}$ shown in Figure \ref{fig:E1_inv}.
\end{proof}

\subsection{Concordances}\label{sec:conc}
Things are trickier when $0$-handle attachments are present. We ask: given a decomposable concordance $\Lambda_1\prec \Lambda_2$, does there exist a Lagrangian concordance $\Lambda_2\prec \Lambda_1$? In this case, the Thurston-Bennequin number is not an obstacle.
The general answer is no. For instance:
\begin{ex}\label{lem:no_conc}(see \cite{Chantraine2015sym})
There exists a Lagrangian concordance between a max-tb unknot and the max-tb $m(9_{46})$ knot. There does not exist a Lagrangian concordance in the opposite direction.
\end{ex}

\begin{proof}
The concordance $\Upsilon\prec m(9_{46})$ is depicted in Figure \ref{fig:946_lin}. The reverse concordance can be obstructed in a number of ways, including by counts of normal rulings or augmentations in the LCH (Legendrian contact homology) of the $m(9_{46})$ knot (see \cite{CornwellNgSivek2016}, as well as \cite{EHK},\cite{pan_augm} for descriptions of the invariants).
\end{proof}

All known obstructions vanish after adding stabilizations (which trivialize the invariants). 
Indeed, recent results (cf \cite{Gol1},\cite{Gol2}) show that given a decomposable concordance $\Lambda_1\prec \Lambda_2$, there must exist a concordance $S(\Lambda_2)\prec S(\Lambda_1)$ after adding enough stabilizations.
We propose a technique that can be used to explicitly reverse decomposable concordances using Lagrangian moves:
\begin{itemize}
\item replace $1$-handle attachments of type $E_1$/$H_1$ with $H_2$ handles;
\item replace any $0$-handle with a Lagrangian `unknot trick' $U_1$ (cf Fig \ref{fig:curls}).
\end{itemize}

As mentioned, this blueprint can't generally work without also adding stabilizations (more on this in Section \ref{sec:areas}).
Let us first see how it works in a toy example, simple enough to not require stabilizations.

\subsection{Inverting a concordance: toy example}\label{sec:toy_example}
\begin{figure}[h]
    \begin{subfigure}{0.25\textwidth}
        \centering

        \includegraphics[width=\linewidth]{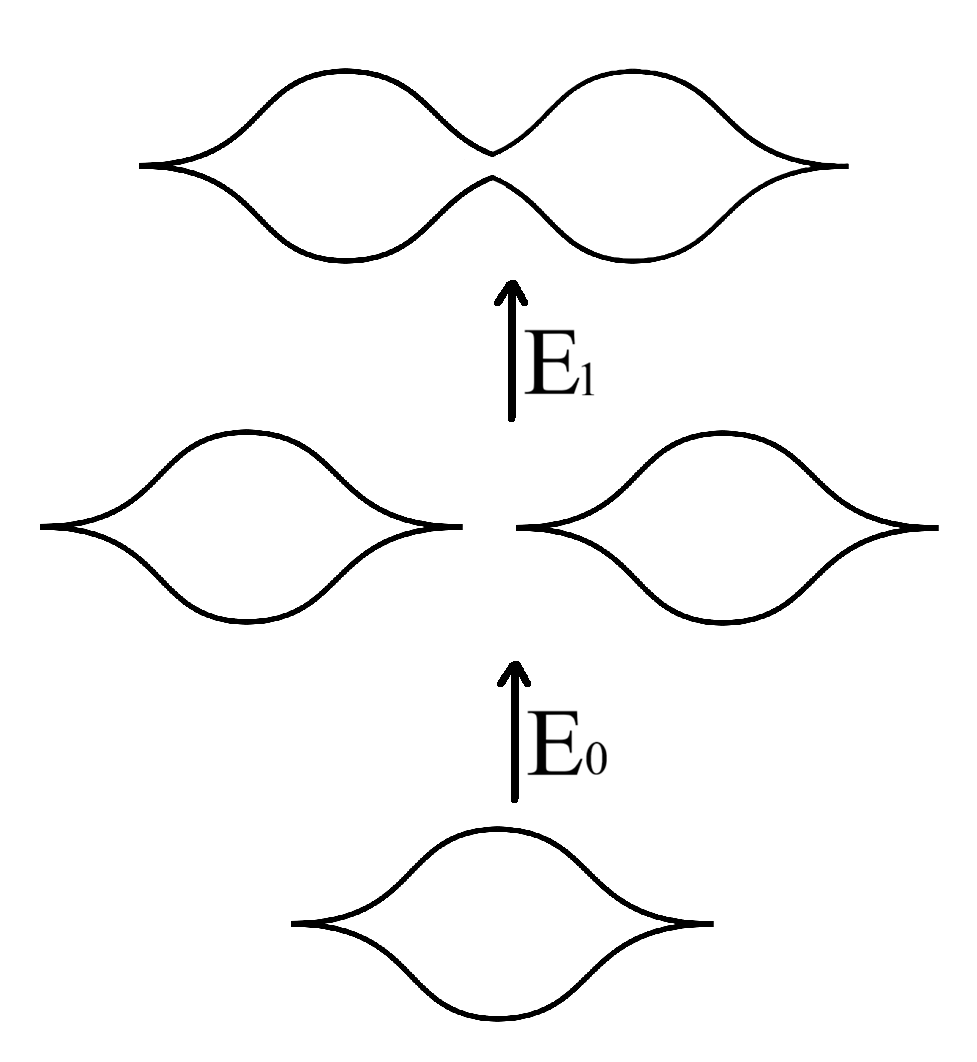}
        \caption{Decomposable concordance}
        \label{fig:toy_leg}
    \end{subfigure}
    \hfill
    \begin{subfigure}{0.3\textwidth}
        \centering
        \includegraphics[width=\linewidth]{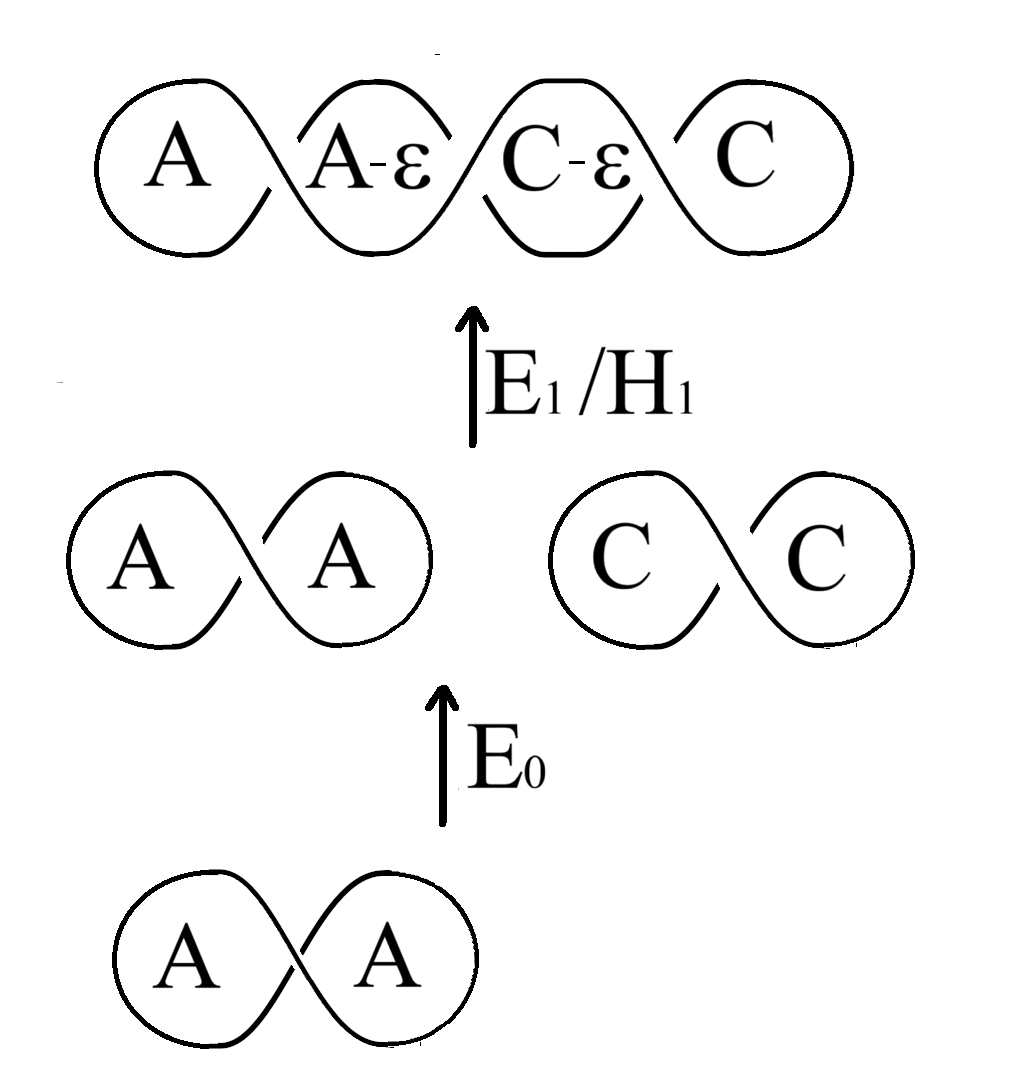}
        \caption{Lagrangian interpretation}
        \label{fig:toy_lag}
    \end{subfigure}
    \hfill
    \begin{subfigure}{0.35\textwidth}
        \centering
        \includegraphics[width=\linewidth]{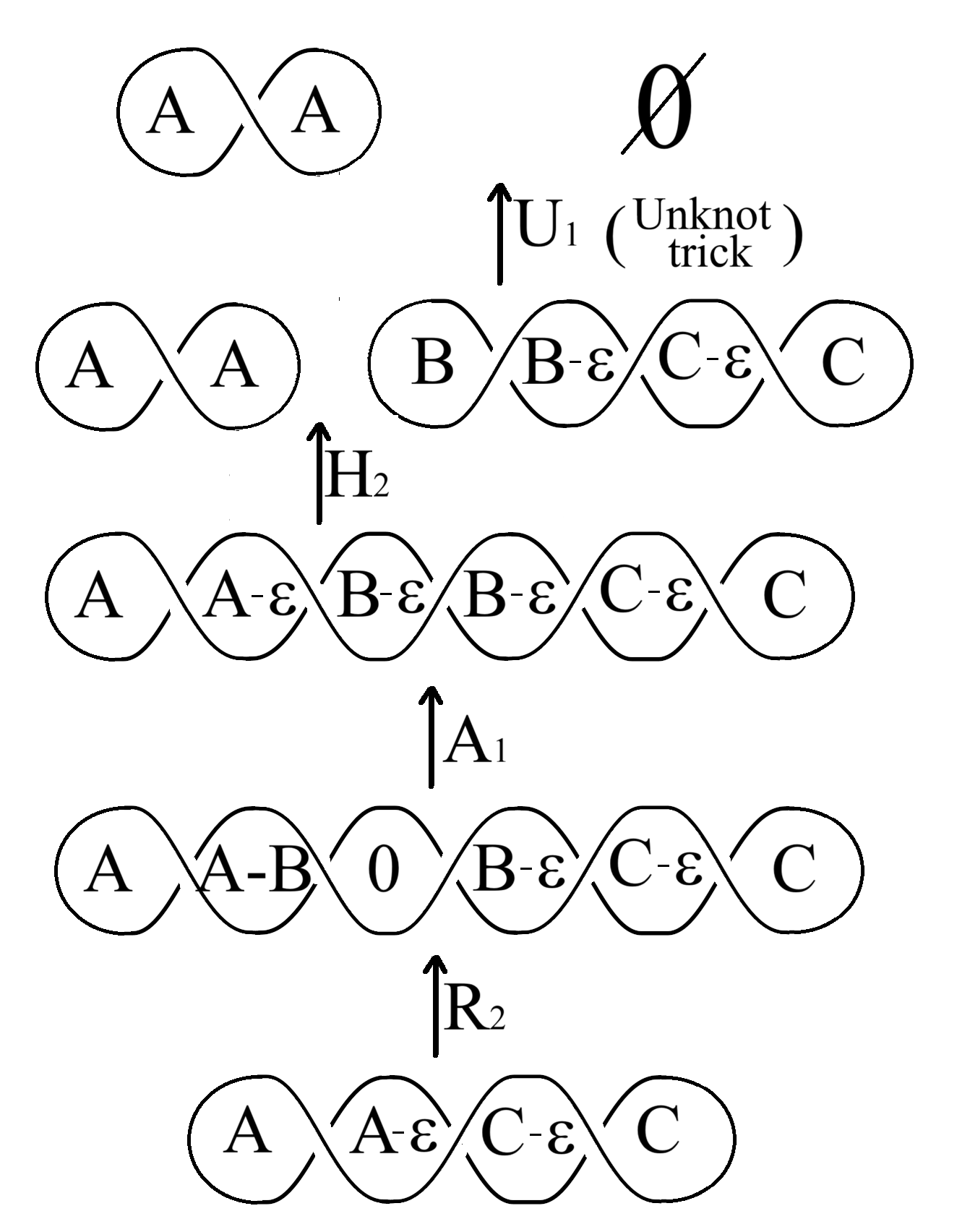}
        \caption{Inverse concordance}
        \label{fig:toy_sol}
    \end{subfigure}
    \caption{Toy example of inversion.}
    \label{fig:easy_invert}
\end{figure}
The technique is illustrated in Figure \ref{fig:easy_invert}. Start with a decomposable concordance of unknots: a $0$-handle attachment followed by a $1$-handle attachment (no isotopies) as in \ref{fig:toy_leg}, \ref{fig:toy_lag}. To build an inverse concordance, start at the top diagram of \ref{fig:toy_lag}. Assume $A>C$. Perform an $R_2$ move followed by $A_1$, to create the areas marked $B$ in the figure, ensuring that $B>C$.
`Invert' the $1$-handle attachment by performing $H_2$.
In this simple example, 
the two resulting components are separated in the $\R^2$ projection; what was previously a $0$-handle attachment can be replaced by a $U_1$ cap, and we are done.

\subsection{Moving areas (trickle tricks)}\label{sec:areas}
It is not always possible to directly apply the method from the previous section and invert a concordance. We have already discussed the obstructions from an LCH perspective: these can be circumvented by adding stabilizations. 

From the perspective of Lagrangian moves, the obstruction presents in the form of areas: larger area regions cannot freely move through smaller area regions (a problem reminiscent of the symplectic camel). 
The technique requires generating a sufficiently large $B$ area, then splitting the components and separating them in the plane while maintaining $B>C$ ($U_1$ only applies if $B>C$).

\begin{figure}
    \begin{subfigure}{0.45\textwidth}
        \centering
        \includegraphics[width=\linewidth]{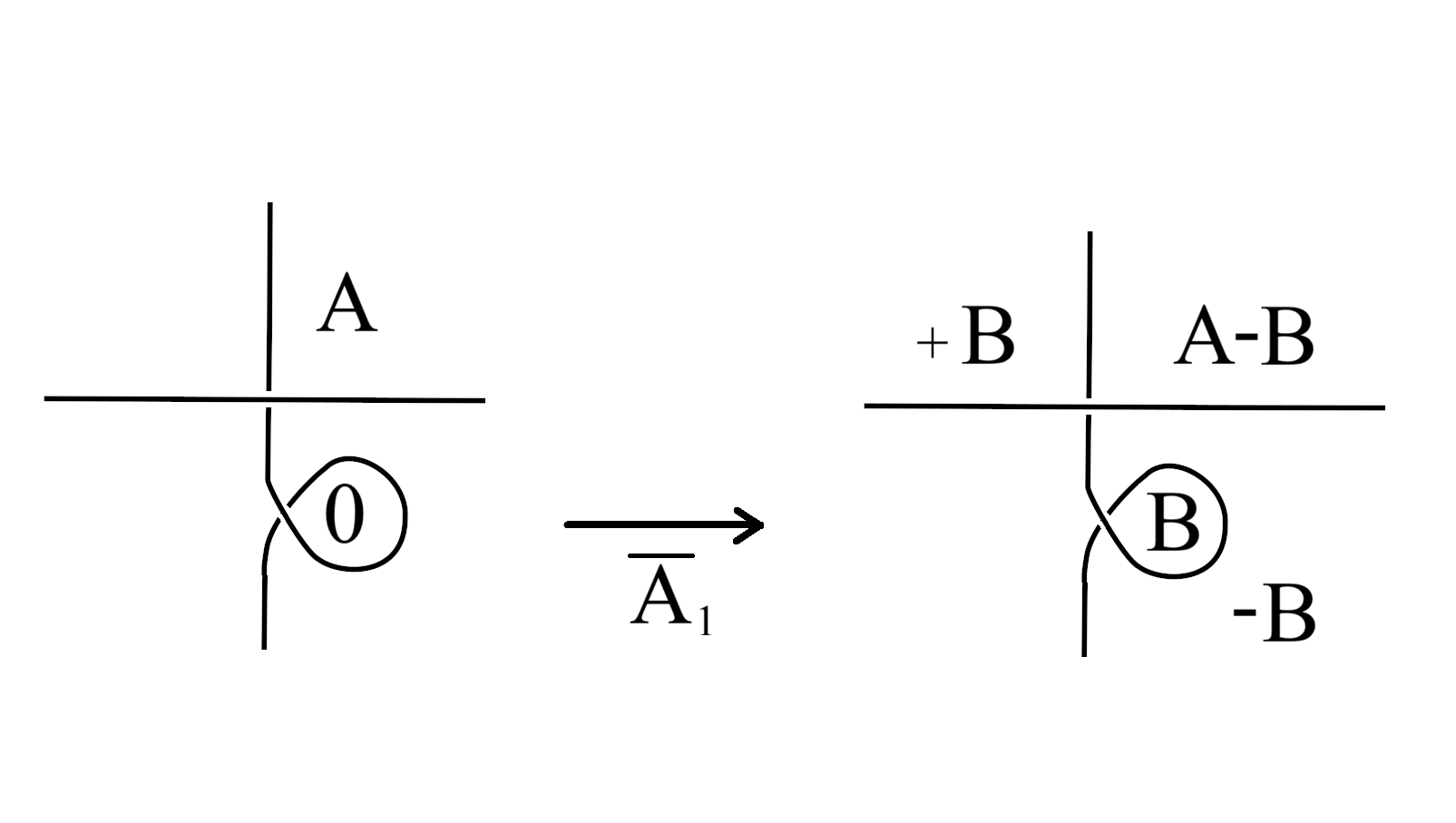}
        \caption{$\bar{A_1}$ ($A\geq 2B$)}
        \label{fig:invert_A1}
    \end{subfigure}
    \hfill
    \begin{subfigure}{0.45\textwidth}
        \centering
        \includegraphics[width=\linewidth]{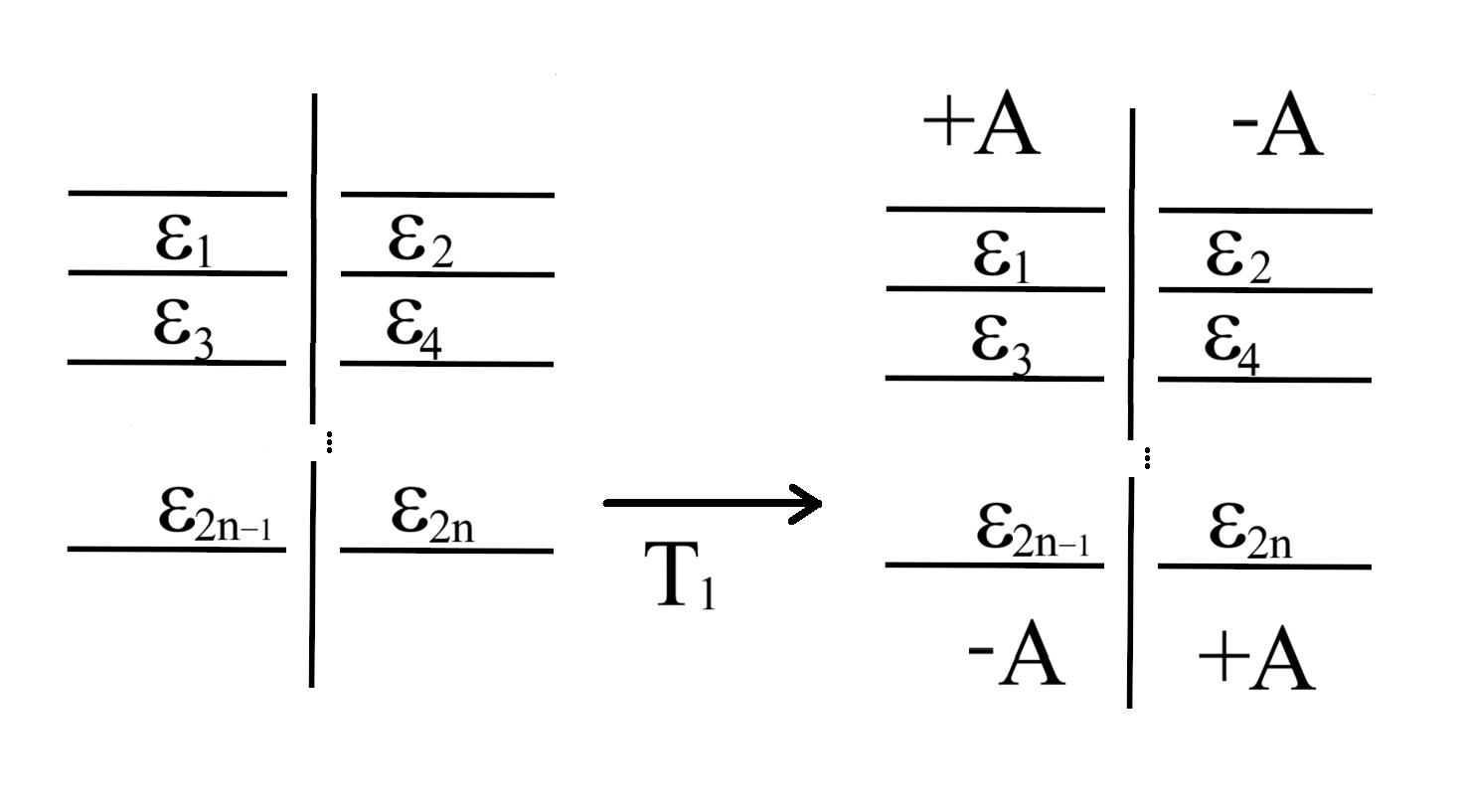}
        \caption{Trickle $T_1$}
        \label{fig:trickle1}
    \end{subfigure}

    \vspace{0.3cm}
    \begin{subfigure}{0.45\textwidth}
        \centering
        \includegraphics[width=\linewidth]{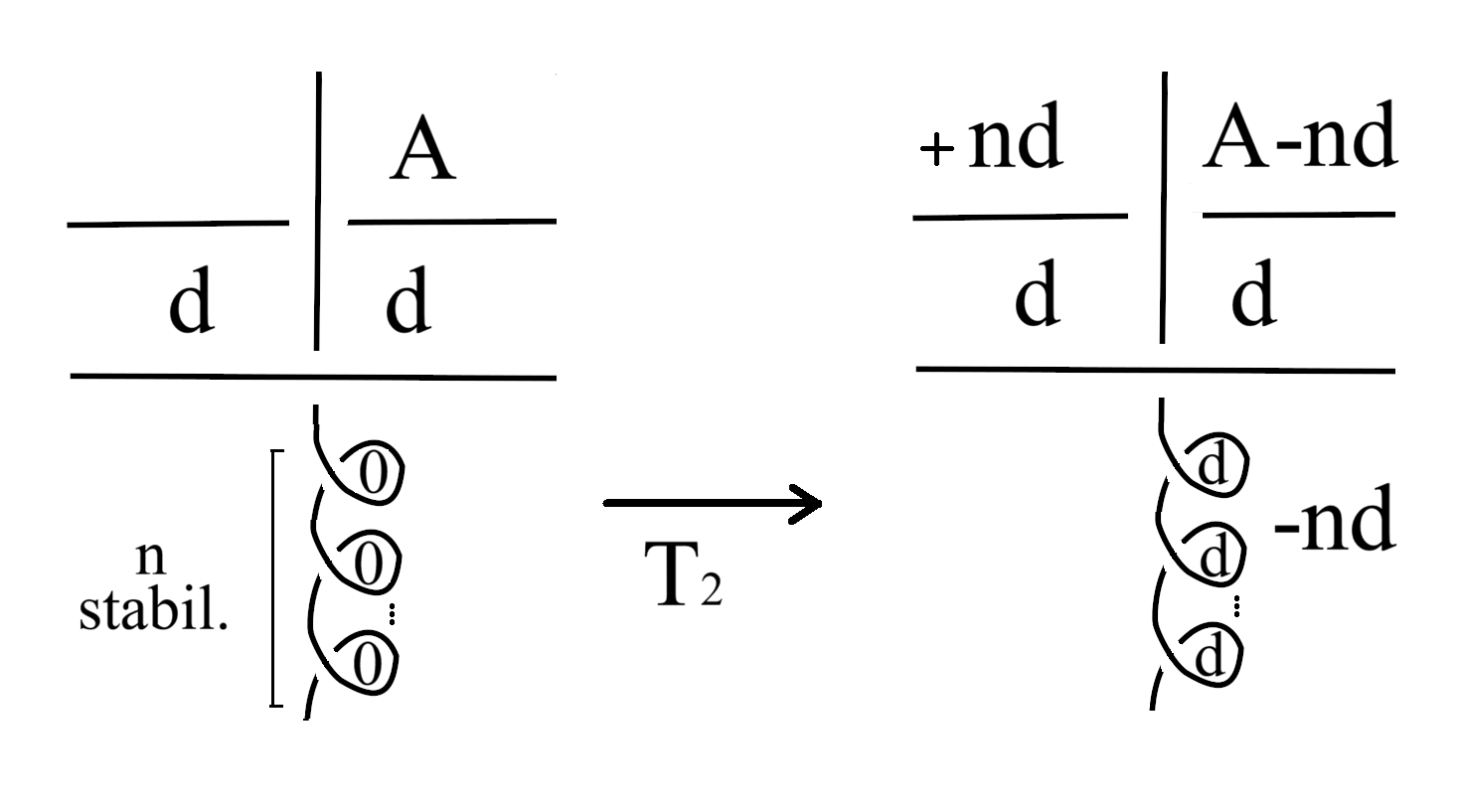}
        \caption{Trickle $T_2$}
        \label{fig:trickle2}
    \end{subfigure}
    \hfill
    \begin{subfigure}{0.45\textwidth}
        \centering
        \includegraphics[width=\linewidth]{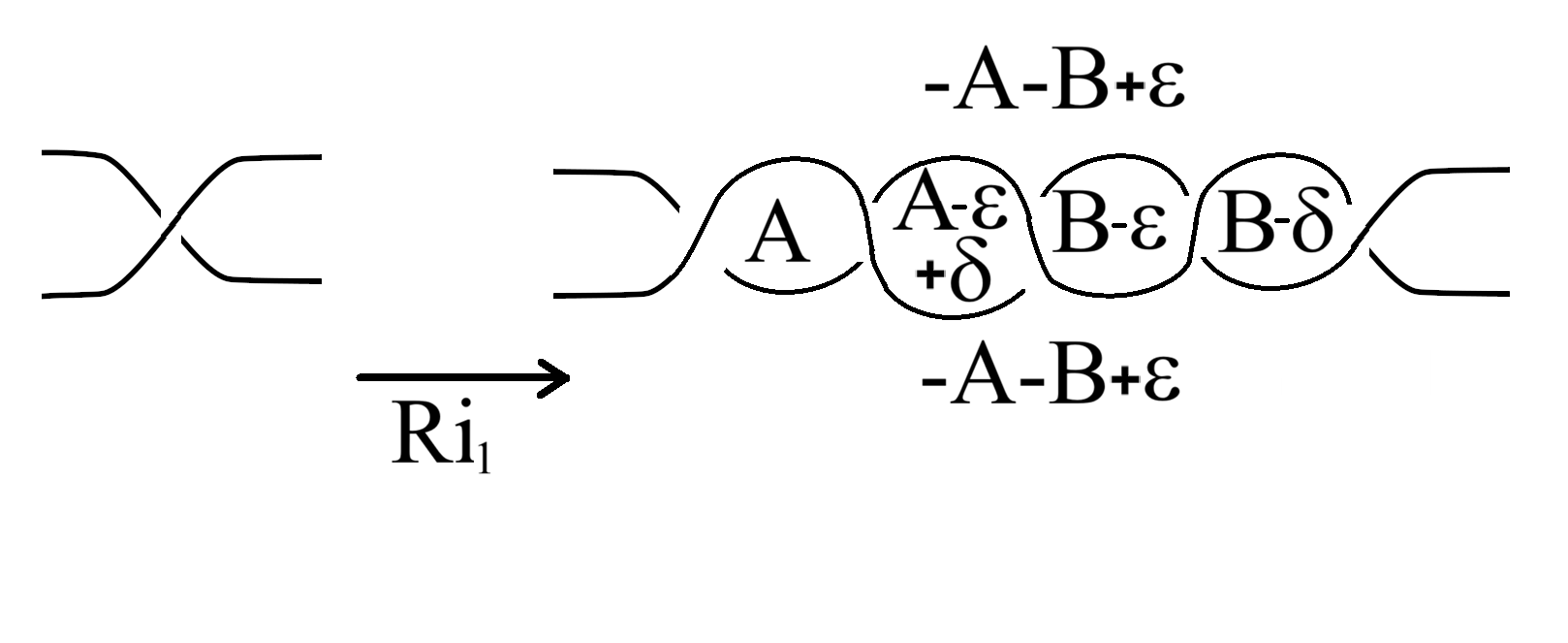}
        \caption{Ribbon move: $A>B\geq\varepsilon\geq0;\\ B-A<\delta<B$}
        \label{fig:ri1}
    \end{subfigure}

    \caption{Trickle moves and ribbon trick.}
    \label{fig:trickles}
\end{figure}
Stabilizations provide the necessary flexibility for this. To see why, observe that the $A_1$ move is directional and can't be reversed. However, a stabilization can be used to partially undo an $A_1$ move, obtaining $\bar{A_1}$ (see Figure \ref{fig:invert_A1}). We rely on $\bar{A_1}$ to build $T_2$. The goal of $T_1,T_2$ is to displace large areas in the diagram. Moves $\bar{A_1},T_1,T_2$ also have analogs when crossing signs are swapped.

\begin{lem}\label{lem:trickles} 
The moves in Figure \ref{fig:trickles} are built of Lagrangian moves $R_2,R_3,A_1$.
\end{lem}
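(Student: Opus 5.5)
The plan is to prove Lemma \ref{lem:trickles} by an explicit decomposition of each of the four moves $\bar{A_1}$, $T_1$, $T_2$ and the ribbon move into a finite sequence of fundamental moves $R_2$, $R_3$, $A_1$. The only invariant we have to track is the area function $\mathcal{A}$. At every intermediate diagram all areas must stay non-negative, and each $A_1$ must be applied in its allowed direction.

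\textbf{Step 1: bookkeeping.} $R_2$ creates two new bigons of area $0$ and leaves the neighbouring areas unchanged. $R_3$ moves a triangle through a crossing, and it is only admissible when the triangle has area $0$ (or we first shrink it to $0$). $A_1$ transfers an amount of area across a crossing, from the two opposite quadrants on one side to the two on the other, with the sign rule fixed by Figure \ref{fig:lagr_moves}. Since every move preserves total signed area, we know in advance which regions must feed which. The task for each move is then to find an order of these transfers that never forces a region negative.

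\textbf{Step 2: $\bar{A_1}$.} This move is the core. Start from the stabilization loop (the negative-crossing curl of area $0$, cf.\ Section \ref{sec:stabilizations}) sitting next to the crossing where we want to run $A_1$ backwards.
\begin{enumerate}
\item Use $R_2$ to push a strand of the curl across the adjacent strand, producing bigons of area $0$.
\item Apply $A_1$ at the curl's negative crossing. Because that crossing has the opposite sign, this transfers area in the direction opposite to an ordinary $A_1$ at the original crossing: an amount $B$ is drawn out of the region of area $A$ into the two new bigons.
\item Use $R_3$ and $R_2$ to slide the curl back, so that the net effect on the original regions is an inverse $A_1$ of weight $B$.
\end{enumerate}
The hypothesis $A\geq 2B$ should come out of this count, because $B$ has to be removed twice (once for each bigon of the $R_2$) from the region of area $A$. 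I will check this in the figure. The curl itself survives, but as a stabilization it is already part of the diagram.

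\textbf{Step 3: $T_1$, $T_2$ and the ribbon move.} For $T_1$, I will realize the transfer of a large area through a thin corridor as repeated $A_1$ moves across the successive crossings of the corridor. Each step moves only as much area as is currently available, and $R_2$/$R_3$ keep the intermediate triangles at area $0$. $T_2$ is the same argument, except that one transfer runs against the allowed direction, and that step is replaced by $\bar{A_1}$ from Step 2. The ribbon move is an $R_2$ that creates a bigon, followed by $A_1$ moves at its two crossings with weights $\delta$ and $A-B+\delta$. The constraints $A>B\geq\varepsilon\geq 0$ and $B-A<\delta<B$ are exactly the conditions for every intermediate area to remain non-negative, and verifying this is a direct inequality check. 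The swapped-sign analogs follow by reflecting the diagrams.

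\textbf{Main obstacle.} I expect the hardest part to be Step 2: finding the precise sequence in which the stabilization curl absorbs the reversed area flow without any region dipping below zero. I would first try the naive order $R_2$, $A_1$, $R_3$, $R_2^{-1}$. If that fails, I would interleave the $A_1$ moves in smaller increments, each bounded by the current minimum adjacent area. This produces a finite sequence because $A\geq 2B$ gives a uniform positive margin at each step.
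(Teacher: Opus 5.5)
Your $T_1$ argument matches the paper's: push an amount smaller than every intermediate area through the chain by consecutive $A_1$ moves, and repeat until all of $A$ has moved. Building $T_2$ on top of $\bar{A_1}$ is also what the paper does.

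\textbf{The gap in $\bar{A_1}$.} This is the one non-routine piece, and $T_2$ depends on it. You give a tentative order, defer its verification to the figure, and offer a fallback, but no actual decomposition.
\begin{itemize}
\item The heuristic meant to make it work is not valid. The direction of $A_1$ at a crossing is fixed by which quadrants are positive corners, i.e.\ by the over/under data, not by the writhe sign. So ``the curl's crossing is negative, hence $A_1$ there runs the other way'' proves nothing.
\item Interleaving smaller increments only protects non-negativity. The real obstruction here is that $A_1$ is directional.
\item Your account of $A\geq 2B$ fails, because an $R_2$ creates a single bigon.
\end{itemize}
The factor $2$ comes from the curl itself. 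Its two side quadrants lie in the same region. So one $A_1$ at the curl's crossing removes $2B$ from that single region and adds $B$ to the loop and $B$ to the region across the strand. The paper proves $\bar{A_1}$ and $T_2$ in Figures~\ref{fig:invertA1_proof} and \ref{fig:trickle2_proof}. It uses the curl moves $C_{1-4}$ of Figure~\ref{fig:curls}, which are already known to be composites of $R_2,R_3,A_1$. That ingredient is missing from your plan.

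\textbf{The ribbon move.} Your decomposition is inconsistent with its hypotheses. You treat $\delta$ as the weight of an $A_1$ move. But $B-A<\delta<B$ with $A>B$ allows $\delta<0$, and $A_1$ cannot be applied with negative weight. In the paper, $\delta$ is the free splitting parameter of a second $R_2$:
\begin{itemize}
\item an $R_2$ followed by an $A_1$ of amplitude $A$ creates two regions of area $A$;
\item a second $R_2$ cuts an area-$A$ region into pieces of area $A-B+\delta$ and $B-\delta$, both positive exactly when $B-A<\delta<B$;
\item a final $A_1$ of amplitude $B-\varepsilon$ brings the central areas to $A-\varepsilon+\delta$ and $B-\varepsilon$.
\end{itemize}
So the move uses two $R_2$'s, not one. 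The constraint on $\delta$ expresses positivity of that split, not non-negativity along a chain of $A_1$'s.
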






\begin{figure}[!h]
    \begin{subfigure}{0.45\textwidth}
        \centering
        \includegraphics[width=\linewidth]{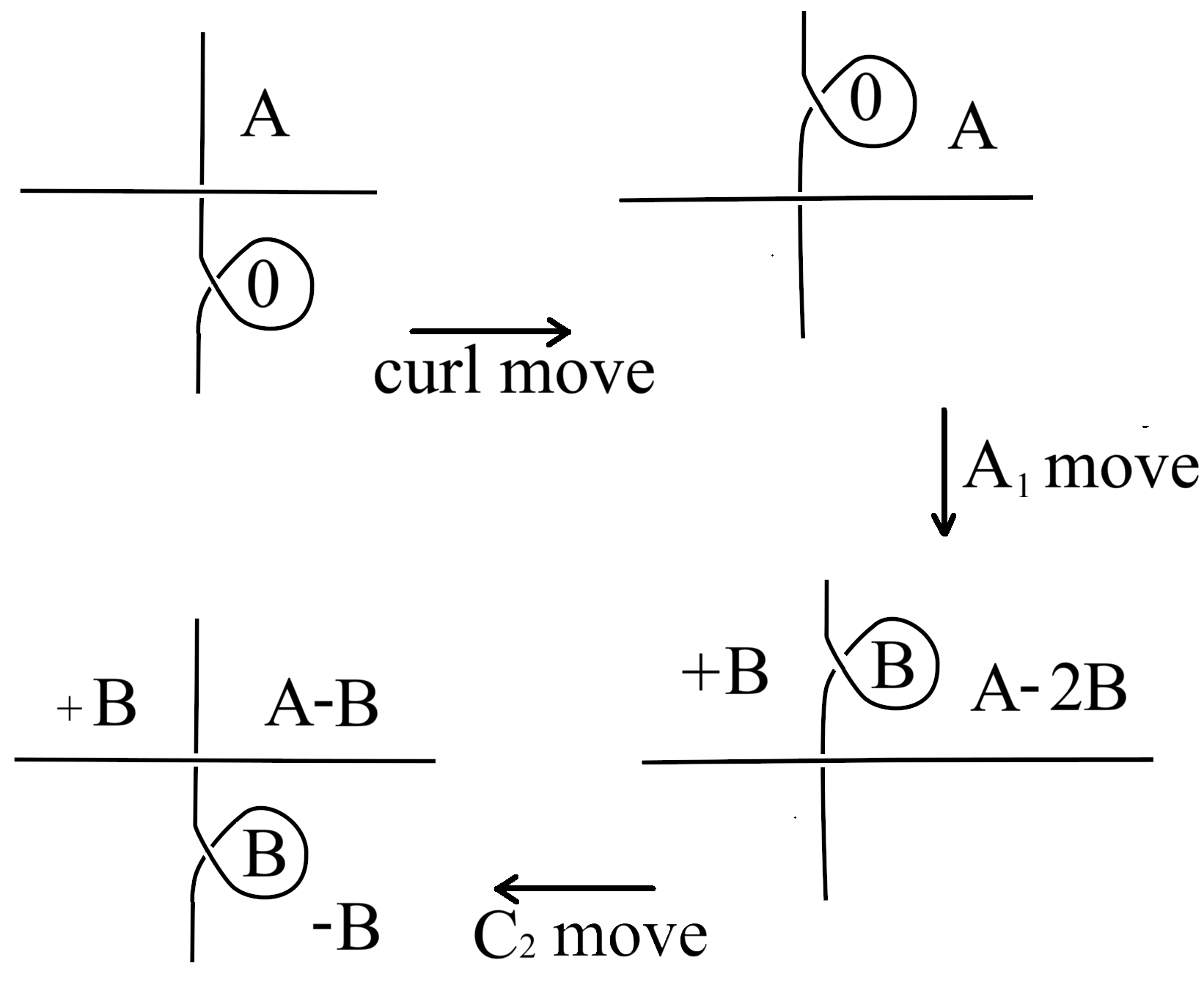}
        \caption{Proof of $\bar{A_1}$ ($A\geq 2B$)}
        \label{fig:invertA1_proof}
    \end{subfigure}
    \hfill
    \begin{subfigure}{0.45\textwidth}
        \centering
        \includegraphics[width=\linewidth]{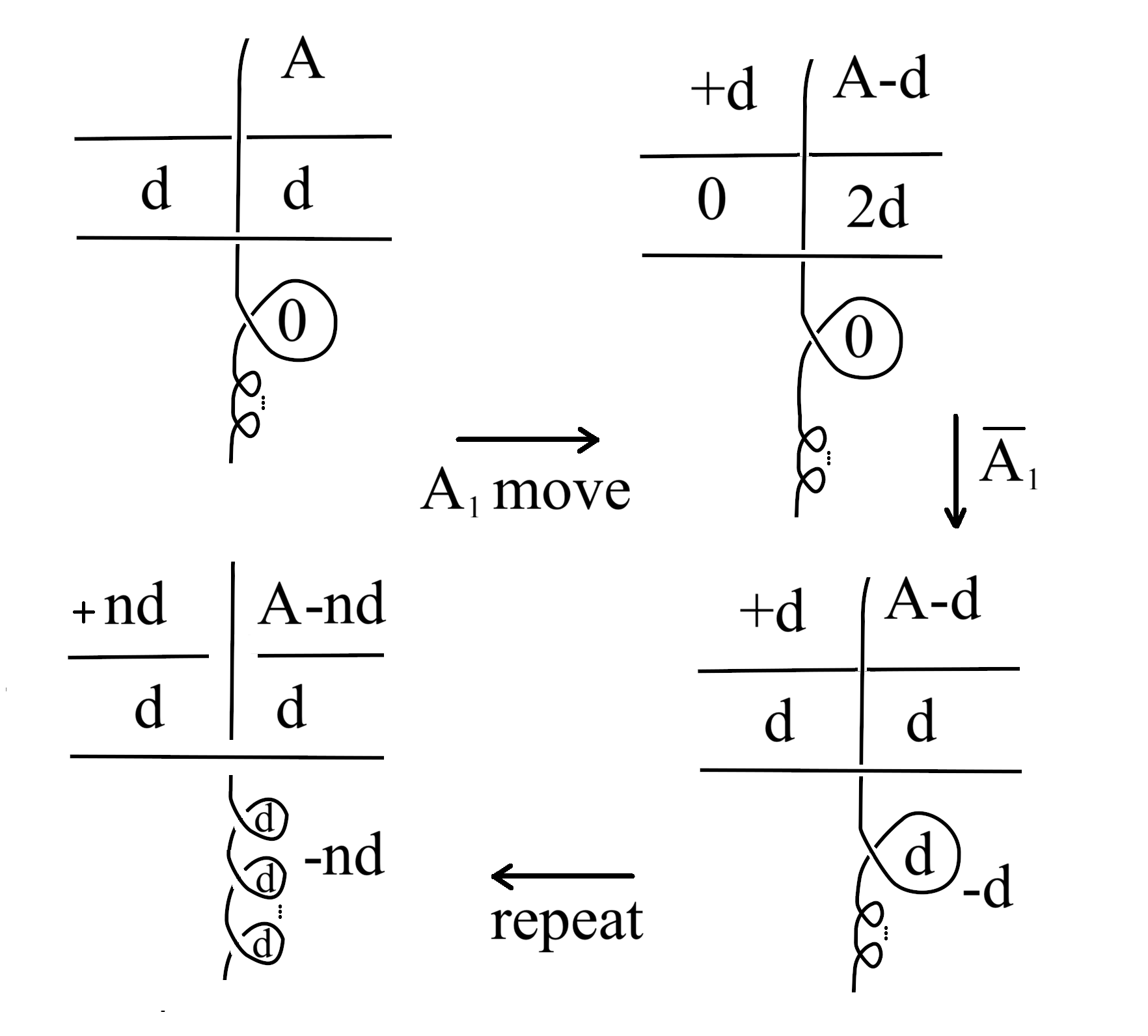}
        \caption{Proof of $T_2$}
        \label{fig:trickle2_proof}
    \end{subfigure}

    \caption{Moving area using stabilizations.}
    \label{fig:trickle_proofs}
\end{figure}

\begin{proof}
For the trickle move $T_1$: perform $n+1$ consecutive $A_1$ moves of size $\varepsilon<\varepsilon_{1,\ldots,2n}$ to displace an $\varepsilon$ area; repeat the process $\frac{A}{\varepsilon}$ times.
For the ribbon move $Ri_1$: perform $R_2$ followed by an area move of amplitude $A$ to create two $A$ areas; then we proceed similarly to the toy example: perform a second $R_2$ move separating $A-B+\delta$ area on one side, $B-\delta$ on the other; an additional area move of amplitude $B-\varepsilon$ increases the central areas to $A-\varepsilon+\delta$, $B-\varepsilon$.

The proofs for moves $\bar{A_1}$, $T_2$ are shown in Figure \ref{fig:trickle_proofs} (they make use of curl moves, which are built from $R_2,R_3,A_1$).
\end{proof}

\subsection{Reversing concordances: general blueprint}\label{sec:bluep}

    \begin{figure}[h]
        \centering
        \includegraphics[width=.6\linewidth]{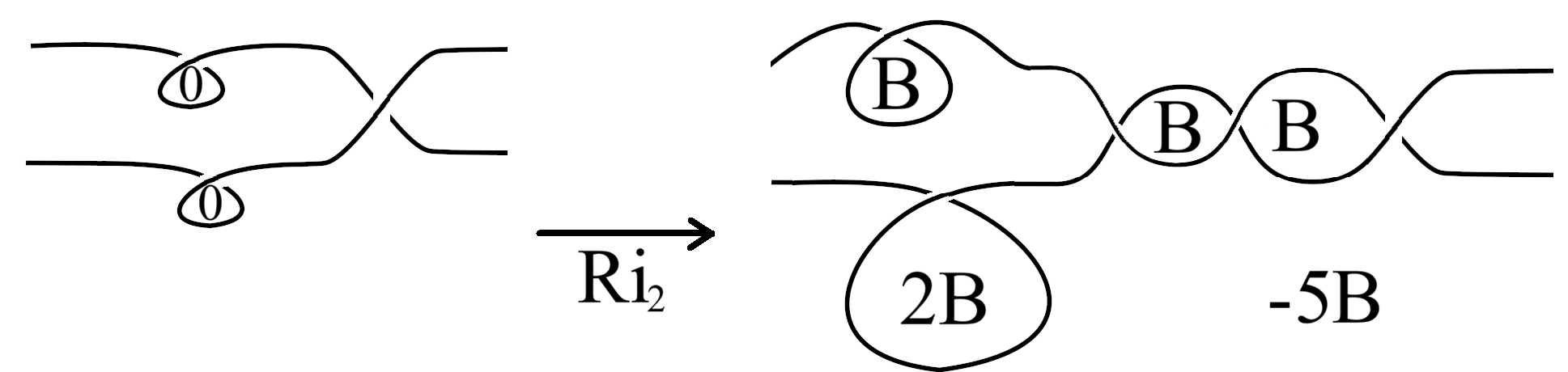}
    \caption{One-sided ribbon trick}
	    \label{fig:ribbon2}
    \end{figure}

For clarity, we restrict to the case of a concordance obtained by a single $0$-handle/$1$-handle pair.
We assume a $0$-handle attachment is followed by the addition of a ribbon joining the $0$-handle to the rest of the knot (the ribbon may be knotted around itself, the original knot, and the $0$-handle). The decomposable $1$-handle is then located along the ribbon. 

We propose the following blueprint for reversing the concordance:
\begin{itemize}
\item Start at the top end; via isotopy, ensure that, close to the $H_1$ move, at least one side of the ribbon is bordering a large area;
\item Perform ribbon trick $Ri_1$ (or the one-sided alternative, $Ri_2$ in Figure \ref{fig:ribbon2}, obtained with two additional stabilizations and some area moves);
\item Detach using the $H_2$ move: this results in two smoothly unlinked, but symplectically linked, knots;
\item Unwind the ribbon to unlink the two components; the unwinding may require moves $T_{1,2}$; $T_2$ requires adding stabilizations;
\item Once the components are unlinked, apply $U_1$.
\end{itemize}

This technique is successful in many cases; we will check the simplest one.

\begin{figure}[!h]
    \begin{subfigure}{\textwidth}
        \centering

        \includegraphics[width=0.7\linewidth]{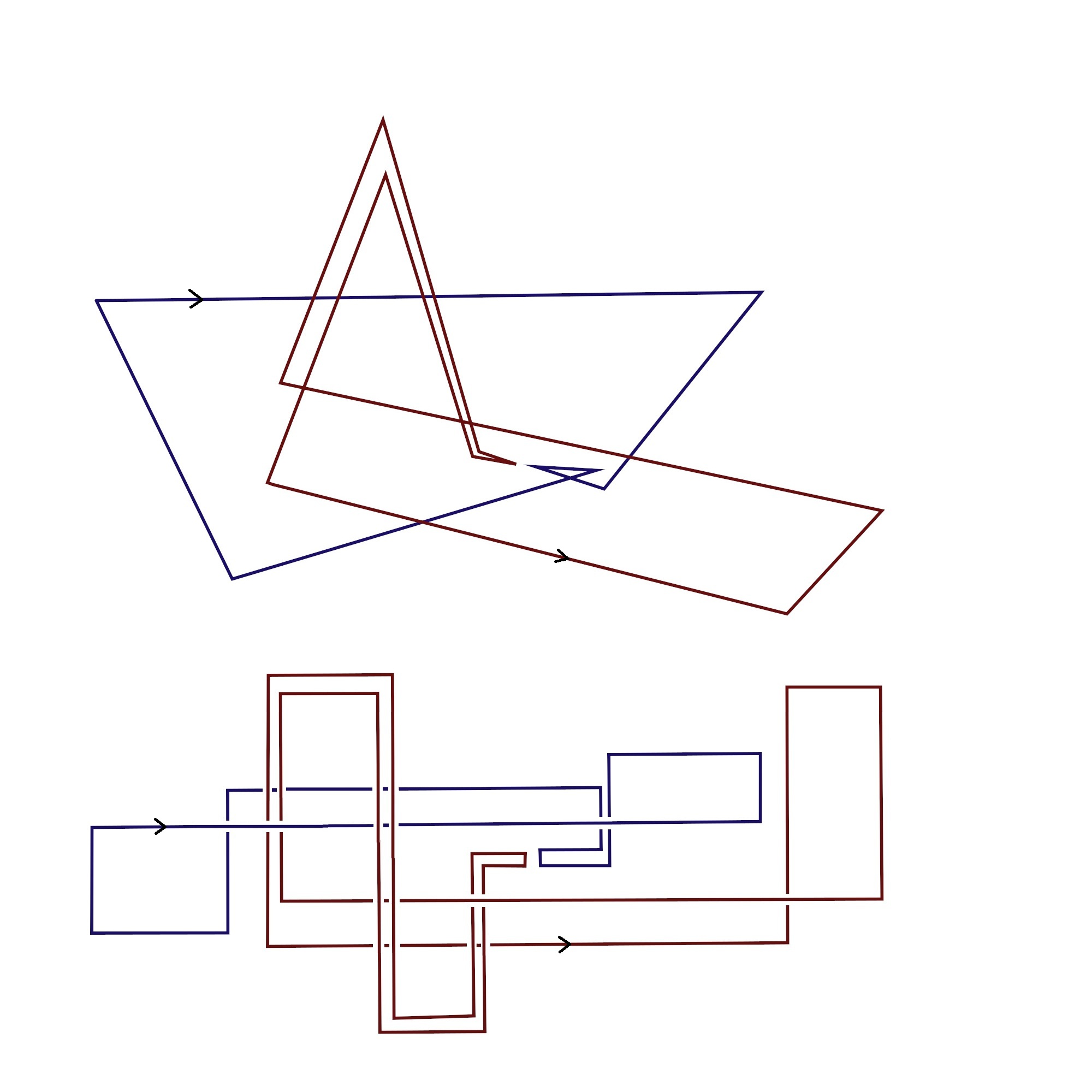}
        \caption{Linear front and Lagrangian projection before attachment (two unknots).}
        \label{fig:946_detached}
    \end{subfigure}

    \begin{subfigure}{\textwidth}
        \centering

        \includegraphics[width=0.6\linewidth]{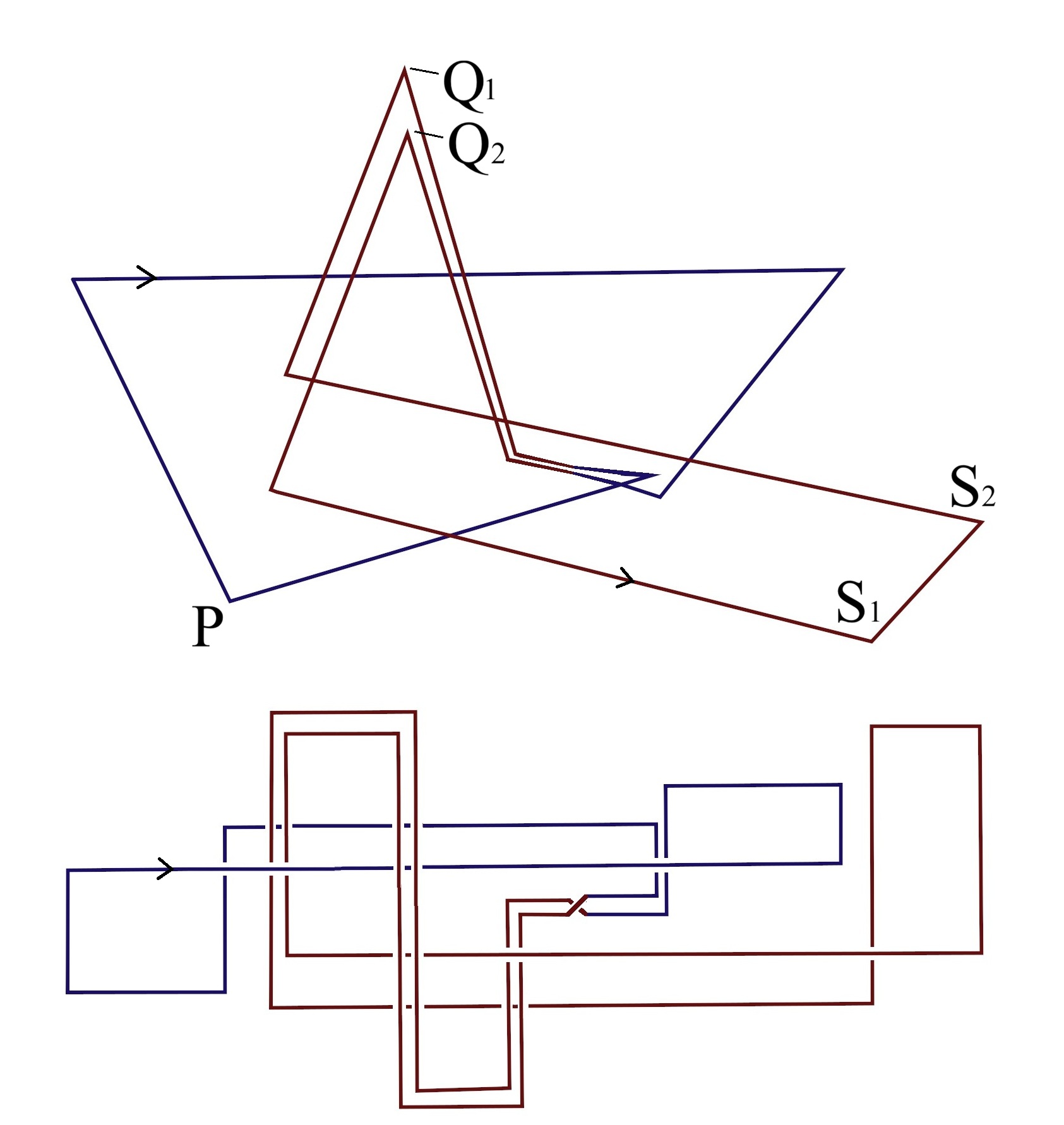}
        \caption{Linear front and Lagrangian projection of $m(9_{46}).$}
        \label{fig:946}
    \end{subfigure}
    
    \caption{Linear front diagrams and Lagrangian projections for $\mathcal{C}$.}
    \label{fig:946_lin}
\end{figure}
\section{The main example}\label{sec:main_example}


\begin{figure}[!h]
    \centering
    \makebox[\textwidth][c]{%
        \includegraphics[width=.8\linewidth]{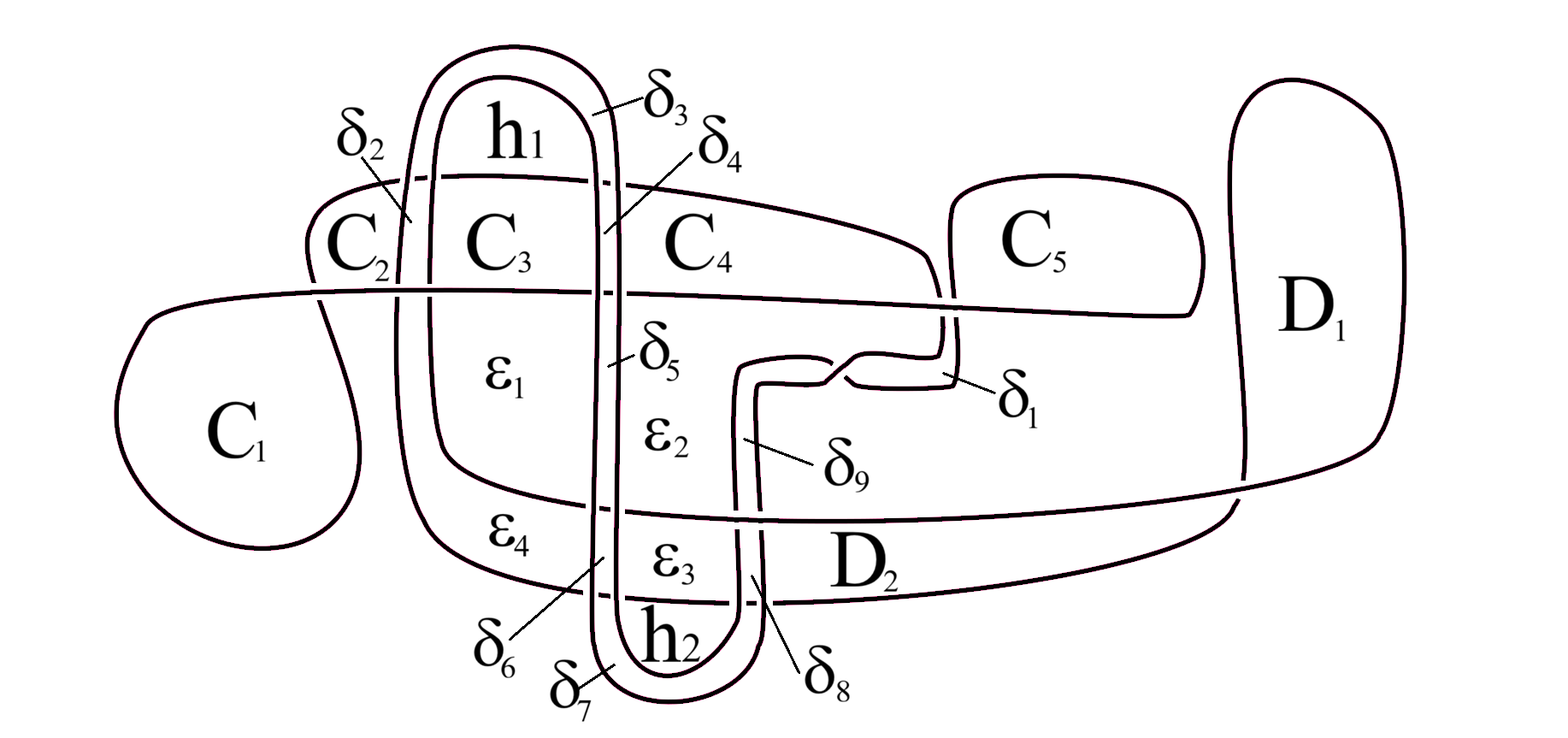}
    }
    \caption{A Lagrangian diagram for the Legendrian $m(9_{46})$ knot}
    \label{fig:step0}
\end{figure}

The goal of this section is Theorem \ref{thm:main_example}, which is proven by reversing the decomposable concordance $\mathcal{C}:\Upsilon\prec m(9_{46})$. We mainly follow the steps from Section \ref{sec:bluep}, with some additional ad hoc diagram manipulation to reduce the number of stabilizations needed, yielding $C:S_+^{4}S_-(m(9_{46}))\prec S_+^{4}S_-(\Upsilon)$.
\subsection{Diagram setup}
Figure \ref{fig:946_lin} shows the linear front diagram and Lagrangian projection of $\mathcal{C}$.
Specifically, Figure \ref{fig:946_detached} shows a linear front and corresponding Lagrangian diagram for two unlinked unknots (right before the handle attachment producing the $m(9_{46})$ knot). Figure \ref{fig:946} shows the same linear front and corresponding Lagrangian diagram for the $m(9_{46})$ knot (after the handle attachment).

Thanks to the linearized diagrams we obtain:
\begin{lem}\label{lem:step0}
The diagram in Figure \ref{fig:step0} represents a max-tb, Legendrian realization of the $m(9_{46})$ knot. The areas satisfy $$C_1+\delta_1 = C_2+C_3+C_4+C_5+\delta_2+\delta_4+\bar{\delta},$$
$$D_1=D_2+\varepsilon_3+\varepsilon_4+\delta_2+\delta_3+\ldots+\delta_9 +\bar{\delta}$$
where $\bar{\delta}$ is an infinitesimal correction term. Also, $\delta_1,\ldots,\delta_9$ can be assumed to be much smaller than all other areas.
\end{lem}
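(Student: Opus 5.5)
The plan is to derive the diagram in Figure \ref{fig:step0} from the linear front of the max-tb $m(9_{46})$ in Figure \ref{fig:946}, using the conversion of Section \ref{sec:front_diagrams}, and then to read off the area relations from the Legendrian condition.

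\emph{Step 1: realization.} First I would check that the linear front of Figure \ref{fig:946} is a front of the max-tb Legendrian $m(9_{46})$. Its Thurston--Bennequin number is the writhe minus half the number of cusps, and this should equal the known value $tb=-1$. Second, I would convert the front to its Lagrangian projection: every Reeb chord sits at a vertex, and every region is a union of rectangles whose widths are $x$-gaps between vertices and whose heights are differences of slopes. This gives the Lagrangian diagram of Figure \ref{fig:946}. Third, I would apply a planar isotopy and rename regions to obtain Figure \ref{fig:step0}. Since the combinatorial diagram and all disk areas are preserved, Lemma \ref{lem:leg_diag} applies with zero area perturbation, so the diagram is Legendrian and represents a Legendrian isotopic knot.

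\emph{Step 2: the identities.} The two displayed equations should come from the Lagrangian-diagram encoding of the Legendrian condition: the total signed area of the closed curve is $0$, because $\oint y\,dx=\oint dz=0$. Equivalently, for any closed sub-loop of the diagram obtained by resolving at crossings, the signed enclosed area equals the difference of $z$-heights at the crossings where the loop switches strands. Concretely, I would choose the crossings bounding the big regions $C_1$ and $D_1$, follow the arc of the knot from the lower to the upper strand there, and write $\Delta z$ as a signed sum of the regions enclosed. The small Reeb-chord heights $\delta_i$ coming from vertices with nearly equal $z$ appear as correction terms. Whatever discrepancy remains from the linearization, such as smoothing the vertices or rounding slopes, is collected into $\bar\delta$; I would show it can be made arbitrarily small by taking the smoothings close to the vertices.

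\emph{Step 3: smallness of the $\delta_i$.} In the linear front I would arrange the relevant vertices so that their $z$-values, or their $x$-gaps, are tiny compared with all the other dimensions. In Figure \ref{fig:946} these are the pairs of strands that nearly coincide near the handle region. Because the areas are products of widths and slope differences, those rectangles shrink linearly. Legendrian isotopy class and max-tb are unaffected, since only the sizes of an already valid front change.

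\emph{Main obstacle.} I expect the hard part to be the bookkeeping of Step 2: identifying, region by region, which rectangles of the linear front make up each of $C_i$, $D_i$, $\varepsilon_j$, $\delta_k$. In particular, one must check that the signs and winding multiplicities in the enclosed-area sums give exactly the stated combinations, and not, for example, $\delta_1$ on the wrong side. I would do this by computing the $z$-differences directly at the vertices of the front and comparing them with the rectangle decomposition.
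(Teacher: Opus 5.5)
Your Steps 1 and 3 agree with the paper. The diagram is read off from the linear front of Figure \ref{fig:946_lin}, and the $\delta_i$ are made small by thinning the front ribbon so that its sides are almost parallel.

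The gap is in Step 2, which does not explain the two identities. Vanishing of the total signed area of the knot gives a single relation, not two. The sub-loop principle is stronger than, not equivalent to, total area zero, and it gives relations of the form (signed enclosed area) $=$ (height of the chord at the closing crossing). Closing at crossings on the boundary of $C_1$ or $D_1$ therefore produces an error term that nothing in your plan makes small.

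More seriously, you attribute $\bar\delta$ to linearization and smoothing error that disappears as the smoothings approach the vertices. For a fixed linear front that error does tend to $0$, but the discrepancy in the identities does not. It is the height of a Reeb chord, hence a fixed positive number; were it $0$, the Legendrian would have a double point. This is also why the proof of Proposition \ref{prop:main_diagrams} must compensate $\bar\delta$ exactly with the parameter $\delta$ of the ribbon trick rather than ignore it. Note also that the $\delta_i$ in the identities are areas of the thin ribbon disks, not chord heights.

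The missing idea is the band-sum structure coming from the concordance $\mathcal C$. The diagram of Figure \ref{fig:946} is obtained from that of Figure \ref{fig:946_detached}, which shows two disjoint Legendrian unknots, by a single $1$-handle. Each unknot component has total signed area $0$, and these two vanishing conditions are exactly the two displayed identities. The handle move $H_1$ shifts area by its parameter, and that shift is the correction $\bar\delta$. This is the paper's argument.

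In your own terms, it amounts to applying the sub-loop relation at the one crossing created by the handle. This is the short chord in the ribbon whose resolution returns the two unknots. The arc from its lower to its upper strand traverses one unknot, and the complementary arc traverses the other. So the two signed areas are $\pm\bar\delta$, with $\bar\delta$ the height of that chord: small because the ribbon is thin, but never zero.
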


\begin{rmk}
For clarity, we avoid explicitly computing all areas in Figure \ref{fig:946_lin}.
\end{rmk}

\begin{figure}[htbp]
    \centering
    \makebox[\textwidth][c]{
        \includegraphics[width=1.3\linewidth]{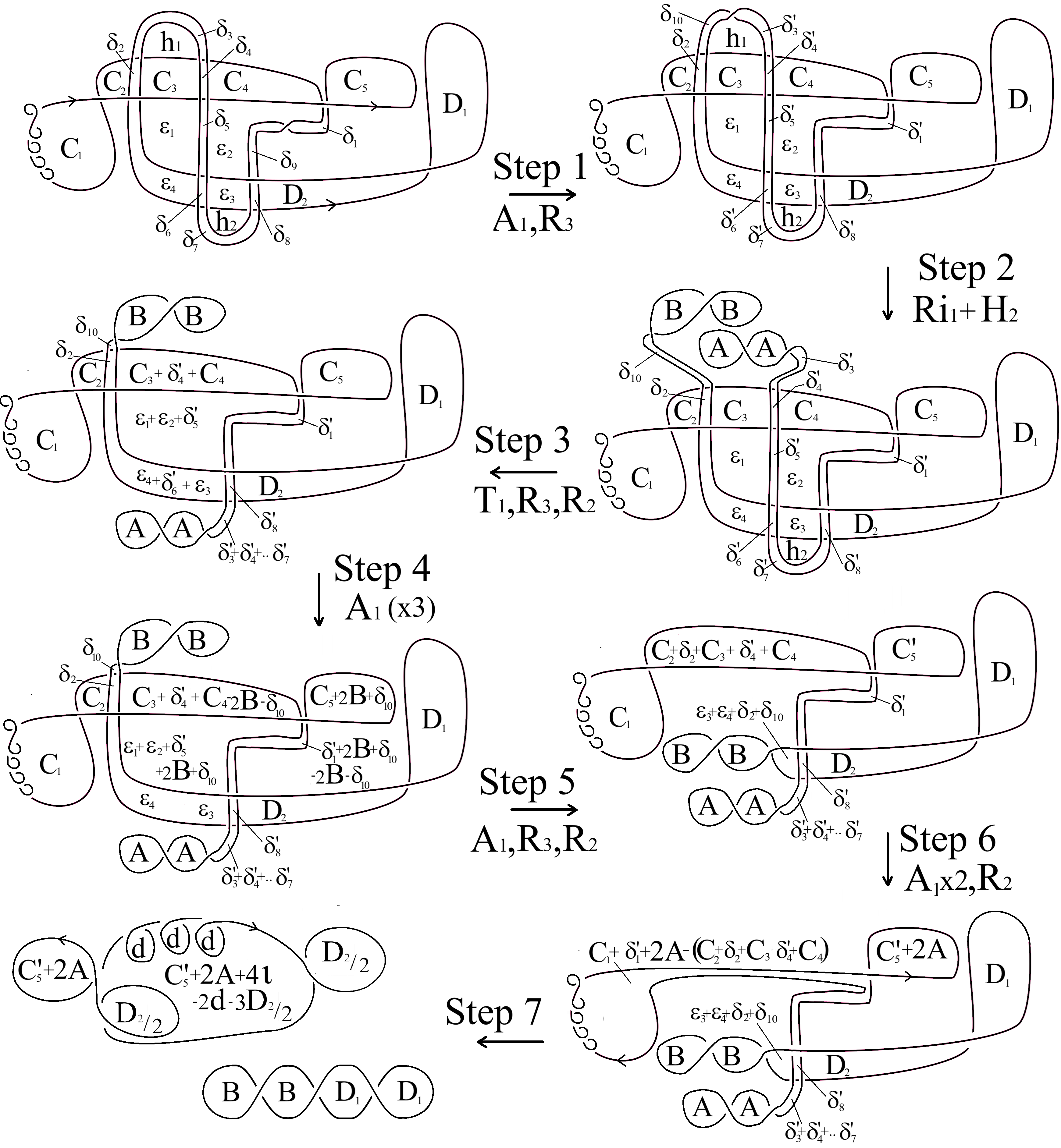}
}
    \caption{Lagrangian moves from a stabilized $m(9_{46})$ to a stabilized unknot.}
    \label{fig:proof}
\end{figure}

\begin{figure}[!h]
    \centering
    \makebox[\textwidth][c]{%
        \includegraphics[width=1.3\linewidth]{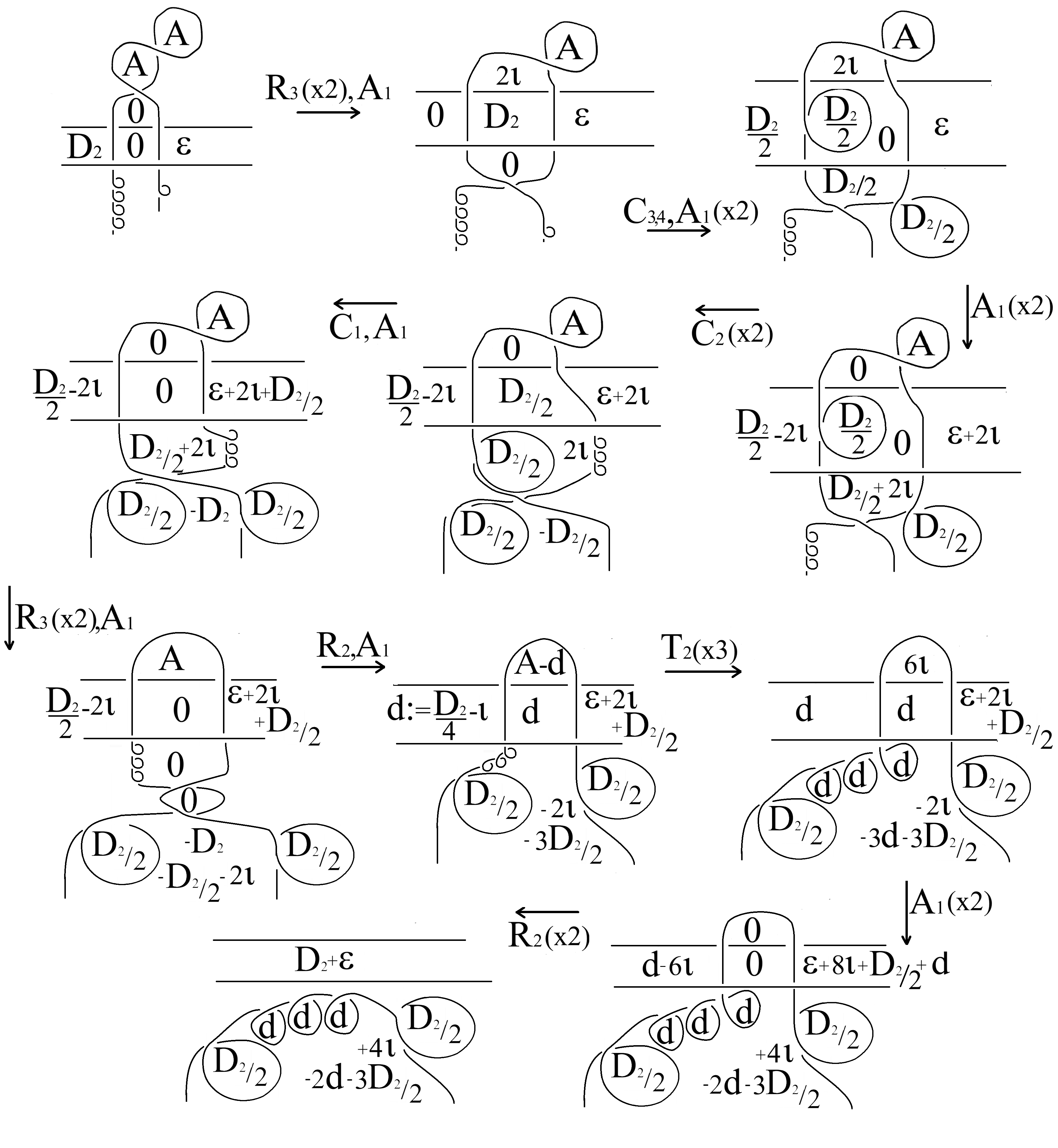}
}
    \caption{Details for Step 7.}
    \label{fig:step7}
\end{figure}

\begin{proof}[Proof of Lemma \ref{lem:step0}]
Figure \ref{fig:946_lin} shows the derivation of the Lagrangian diagram. Compare Figure \ref{fig:946} to \ref{fig:946_detached}. In \ref{fig:946_detached}, each component is a Legendrian unknot, thus the corresponding areas satisfy $C_1+\delta_1 = C_2+C_3+C_4+C_5+\delta_2+\delta_4$ and $D_1=D_2+\varepsilon_3+\varepsilon_4+\delta_2+\delta_3+\ldots+\delta_9$. In  \ref{fig:946} the same is true up to a correction term due to the handle attachment (see move $H_1$ in Figure \ref{fig:lagr_moves}).

To see that $\delta_1,\ldots,\delta_9$ can be made arbitrarily small, notice that the ribbon they create in the Lagrangian diagram corresponds to a ribbon in the (linear) front projection (refer again to Figure \ref{fig:946_lin}). The ribbon in the linear front can be made arbitrarily thin (up to isotopy) and with almost-parallel sides. This, in turn, will make the ribbon in the Lagrangian projection arbitrarily thin.
\end{proof}

\subsection{Constructing the concordance}

Starting from the diagram in Figure \ref{fig:step0}, one could apply the ribbon trick $Ri_2$ and then proceed with detaching and unwinding the resulting components. We add a few steps to be able to use $Ri_1$ instead, which uses fewer stabilizations.


\begin{prop}\label{prop:main_diagrams}
There is a sequence of Lagrangian moves starting from the Lagrangian diagram associated to $S_+^{4}S_-(m(9_{46}))$ and ending at the diagram associated to $S_+^{4}S_-(\Upsilon)$. The sequence includes $H_2$ and $U_1$ exactly once, and further only $R_2,R_3,A_1$.
\end{prop}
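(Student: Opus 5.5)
The plan is to follow the blueprint of Section \ref{sec:bluep}, starting from the diagram of Lemma \ref{lem:step0} (Figure \ref{fig:step0}) with five formal stabilization loops of area $0$ added (Section \ref{sec:stabilizations}): four on the right of the oriented strand and one on the left. The loops are placed where they will later feed the moves that consume stabilizations, namely $\bar{A_1}$ inside $T_2$ and the curl moves. Every move is a composite of $R_2,R_3,A_1$ (Lemma \ref{lem:trickles}, curl moves $C_{1-4}$), apart from one $H_2$ and one $U_1$. The proof therefore amounts to writing down an explicit sequence of diagrams, as in Figure \ref{fig:proof}, and checking the area inequality at every step.

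\textbf{Step 1 (prepare a large area beside the ribbon).} The ribbon formed by the $\delta_i$ regions is thin, so the areas $C_1,\ldots,C_5,D_1,D_2$ dominate. Using $R_2,R_3$ isotopies and trickles $T_1$, I will move area so that one side of the ribbon, near the former $H_1$ location, borders a region of area larger than the total $C$-side area that the eventual $U_1$ must beat. Arranging area on both sides lets us use $Ri_1$ rather than $Ri_2$, which saves two stabilizations. \textbf{Step 2.} Apply $Ri_1$, with parameters chosen as in Figure \ref{fig:ri1}, to create the $B$ regions. Then apply $H_2$ at the ribbon, choosing its area parameter so that the identities of Lemma \ref{lem:step0} give each of the two resulting components total area zero, up to absorbing $\bar\delta$. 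This produces two smoothly unlinked but diagrammatically entangled unknots.

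\textbf{Step 3 (unwinding, the main obstacle).} The former ribbon, now split, winds through the $m(9_{46})$ clasps. I will pull it back through the crossings using $R_2,R_3$. Wherever a large area region would have to pass through a smaller one, I use $T_1$ if the crossing signs allow it, and otherwise $T_2$, which consumes stabilizations via $\bar{A_1}$ and needs $A\geq 2B$. The delicate bookkeeping is to:
\begin{itemize}
\item keep $B>C$ throughout;
\item never require a negative area;
\item use exactly four positive loops and one negative loop.
\end{itemize}
I would first try routing each $T_2$ so that its loops sit on the right of the strand, reserving the single negative loop for the one crossing of opposite sign. This is where the ad hoc manipulations of Figure \ref{fig:step7} would come in.

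\textbf{Step 4.} Once the components are separated in $\mathbb{R}^2$, apply $U_1$ to cap off the small component, valid since $B>C$. Then simplify the remaining diagram by $R_2,R_3,A_1$ to the standard diagram of the max-tb unknot carrying the leftover stabilization loops. Any loops not consumed remain as formal zero-area loops, giving the diagram of $S_+^4S_-(\Upsilon)$. By Lemma \ref{lem:leg_diag}, matching areas up to small error identifies it with the Legendrian diagram.
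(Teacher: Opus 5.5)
Your skeleton is the paper's skeleton: the blueprint of Section~\ref{sec:bluep}, using $Ri_1$ rather than $Ri_2$, an $H_2$ whose parameter is matched to $\bar\delta$, unwinding by $T_1,T_2$, and a final $U_1$. But the proposition's actual content is that this can be carried out with exactly four positive and one negative stabilization. Your Step 3 (``I would first try routing each $T_2$\dots'') leaves precisely that open. As written, the proposal restates the blueprint without exhibiting or checking the sequence. What is missing is the \emph{calibration of areas} that makes the unwinding possible with five loops.

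\textbf{Choice of representative.} The paper does not manufacture large regions with Lagrangian moves. Instead it chooses the Legendrian representative of $m(9_{46})$ by isotopy of the linear front: it moves $S_1,S_2$ right, lowers $P$, raises $Q_1,Q_2$, and thins the ribbon. This gives
\[
0<\iota:=D_1-D_2<D_1/100,\quad C_2,C_3,C_4>2(D_1+\iota),\quad h_1,h_2>2(D_1+\iota),\quad \delta_i\ll\varepsilon_j,\iota .
\]
Your Step 1 tries instead to pile area next to the ribbon using $R_2,R_3,T_1$. This is unjustified, because these moves preserve total area and $A_1$ only moves area in one direction. It would also drain regions you need later. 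Above all, it gives no route to the key inequality that $D_1$ and $D_2$ be nearly equal.

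\textbf{Ribbon-trick parameters and the unwinding.} The parameters are tied to these areas: $B=D_1+\iota/2$ and $A=D_1+\iota=D_2+2\iota$.
\begin{itemize}
\item $Ri_1$ applies because $h_1>A+B$; your stated target (an area beating ``the $C$-side'') is not the relevant condition.
\item Step~5 needs $C_2>2B+\delta_{10}$, after first enlarging $\varepsilon_1+\varepsilon_2+\delta'_5$.
\item The core Step~7 (Figure~\ref{fig:step7}) pushes all the $A$ areas through the $D_2$ region by repeated $T_2$ moves. This works with only five loops precisely because $A$ exceeds $D_2$ by the tiny amount $2\iota$, with explicit bookkeeping via $d=D_2/4-\iota$.
\end{itemize}
Without these choices you cannot verify the hypotheses of $\bar{A_1}$ inside each $T_2$, nor bound the number of loops. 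So the claimed count $4+1$ is not established.

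\textbf{Loops are not consumed.} $\bar{A_1}$, $T_2$ and the curl moves are composites of $R_2,R_3,A_1$, which preserve writhe and rotation number. The resulting exact concordance also preserves $tb$ and $r$. So all five loops must survive to the end; the paper's final component is a large figure-eight with five small loops. If a loop were really consumed, as your Step 4 allows, you would not land on $S_+^4S_-(\Upsilon)$.
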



\begin{proof}
Start with a Lagrangian diagram of $m(9_{46})$ as in Figure \ref{fig:946}. We can tweak the diagram via Legendrian isotopy to ensure a few properties for the resulting diagram in Figure \ref{fig:step0}. In the following order:
\begin{itemize}
\item move $S_1,S_2$ to the right, increasing $D_{1,2}$, until $0<\iota:=D_1-D_2<\frac{D_1}{100}$;
\item lower $P$ and raise $Q_1,Q_2$ along $z$, until $C_2,C_3,C_4>2(D_1+\iota)$;
\item further increase the $z$-coordinate of $Q_1,Q_2$ to ensure $h_1,h_2>2(D_1+\iota)$;
\item resize the ribbon to ensure $\delta_1,\ldots\delta_9\ll\varepsilon_1,\ldots,\varepsilon_4,\iota$.
\end{itemize}

We can then add the needed stabilizations. From this point, the proof follows Figure \ref{fig:proof}. Step 1 is straightforward, given that all $\delta,\delta'$ are very small. For Step 2, let $B=D_1+\frac{\iota}{2}, A=D_1+\iota$; the ribbon trick can be applied since $h_1>A+B$. We moreover can choose $\delta$ in the ribbon trick to match the error term $\bar{\delta}$ from Lemma \ref{lem:step0}, so that, after $H_2$, the two resulting link components both have total area $0$. For readability, such $\delta$ is ignored in the figure.

Step 3 is analogous to Step 1, with the addition of $T_1$ to displace the larger $A$ areas. Step 4 enlarges the area $\varepsilon_1+\varepsilon_2+\delta'_5$ in preparation of Step 5. Step 5 is then possible since $C_2>2B+\delta_{10}$ and $\varepsilon_1+\varepsilon_2+\delta'_5 +2B>2B+\delta_{10}$. Step 6 is a straightforward diagram simplification.

Step 7 is the core of the proof: this is where the stabilizations are needed; there are multiple ways to solve this step, applying $T_2$ repeatedly to move all the $A$ areas through the $D_2$ area. Our proposed solution is locally expanded in Figure \ref{fig:step7}. Here, infinitesimal areas are denoted by $0$ for clarity as they don't affect the proof. We make repeated use of the equality $A=D_2+2\iota$. We also let $d=\frac{D_2}{4}-\iota$ for readability.

Step 7 produces two separate components, both with total area $0$ thanks to Step 2. One of these can be capped via a $U_1$ move, while the second one is a stabilized unknot (this is easy to see directly, since we reduced it to a standard form of a large eight-shape with five additional, smaller loops). 

To conclude the proof, notice that the only handles are $H_2$ and $U_1$.
\end{proof}

\begin{proof}[Proof of Theorem \ref{thm:main_example}]
Proposition \ref{prop:main_diagrams} provides a sequence of Lagrangian moves from $D_{S_+^{4}S_-(m(9_{46}))}$ to $D_{S_+^{4}S_-(\Upsilon)}$. The sequence starts as a connected knot, adds a single $1$-handle attachment $H_2$ (in Step 2) splitting it into two components, followed by a single cap of one component (at the very end). So, the cobordism obtained from Proposition \ref{thm:leg_extension} is smoothly a concordance.
Corollary \ref{cor:lagr_conc} then ensures it is a Lagrangian concordance of Legendrians.
\end{proof}

\bibliography{mybib}

@article{EHK,
    author = {Ekholm, Tobias and Honda, Ko and K{\'a}lm{\'a}n, Tam{\'a}s},
    title = {Legendrian knots and exact {L}agrangian cobordisms},
    journal = {Journal of the European Mathematical Society},
    volume = {18},
    year = {2016},
    number = {11},
    pages = {2627--2689},
    doi = {10.4171/JEMS/650}
}

@article{lin,
  author    = {Lin, Francesco},
  title     = {Exact {L}agrangian caps of {L}egendrian knots},
  journal   = {Journal of Symplectic Geometry},
  year      = {2016},
  volume    = {14},
  number    = {1},
  pages     = {269-295},
  doi       = {10.4310/JSG.2016.v14.n1.a10},
  publisher = {International Press of Boston}
}

@article{Gol1,
  author  = {Dimitroglou Rizell, Georgios and Golovko, Roman},
  title   = {Instability of Legendrian knottedness, and non-regular Lagrangian concordances of knots},
  journal = {Advances in Mathematics},
  volume  = {502},
  year    = {2026},
  eid     = {111133},
  doi     = {10.1016/j.aim.2026.111133}
}

@article{Gol2,
  author  = {Dimitroglou Rizell, Georgios and Golovko, Roman},
  title         = {Non-regular Lagrangian concordances between Lagrangian fillable Legendrian knots},
  journal       = {arXiv preprint},
  eprint        = {2509.13594},
  archivePrefix = {arXiv},
  year          = {2025},
  note          = {To appear in the Proceedings of the American Mathematical Society}
}

@article{pan_augm,
  author    = {Pan, Yu},
  title     = {The augmentation category map induced by exact {L}agrangian cobordisms},
  journal   = {Algebraic \& Geometric Topology},
  volume    = {17},
  number    = {3},
  pages     = {1813--1870},
  year      = {2017},
  publisher = {Mathematical Sciences Publishers}
}

@article{CornwellNgSivek2016,
  author    = {Cornwell, Christopher R. and Ng, Lenhard and Sivek, Steven},
  title     = {Obstructions to {L}agrangian concordance},
  journal   = {Algebraic \& Geometric Topology},
  year      = {2016},
  volume    = {16},
  number    = {2},
  pages     = {797--824},
  doi       = {10.2140/agt.2016.16.797},
  publisher = {Mathematical Sciences Publishers}
}

@article{chantraine2010concordance,
  author  = {Chantraine, Baptiste},
  title   = {Lagrangian concordance of {L}egendrian knots},
  journal = {Algebraic \& Geometric Topology},
  volume  = {10},
  number  = {1},
  pages   = {63--85},
  year    = {2010},
  doi     = {10.2140/agt.2010.10.63}
}

@article{Datta,
  author    = {Datta, Ipsita},
  title     = {Lagrangian Cobordisms between Enriched Knot Diagrams},
  journal   = {Journal of Symplectic Geometry},
  volume    = {21},
  number    = {1},
  pages     = {159--234},
  year      = {2023},
  publisher = {International Press},
  doi       = {10.4310/JSG.2023.v21.n1.a4},
  eprint    = {2112.10015},
  archivePrefix = {arXiv}
}

@incollection{survey,
  author    = {Blackwell, Sarah and Legout, No{\'e}mie and Leverson, Caitlin and Limouzineau, Ma{\"y}lis and Myer, Ziva and Pan, Yu and Pezzimenti, Samantha and Su{\'a}rez, Lara Simone and Traynor, Lisa},
  title     = {Constructions of {L}agrangian cobordisms},
  booktitle = {Research Directions in Symplectic and Contact Geometry and Topology},
  series    = {Association for Women in Mathematics Series},
  volume    = {27},
  pages     = {245--272},
  publisher = {Springer},
  address   = {Cham},
  year      = {2021},
  primaryClass  = {math.SG},
  doi       = {10.1007/978-3-030-80979-9_5},
  url       = {https://arxiv.org/abs/2101.00031}
}

@article{ELST08,
  author    = {Eiseman, Phil and Lima, Jonathan D. and Sabloff, Joshua M. and Traynor, Lisa},
  title     = {A partial ordering on slices of planar {L}agrangians},
  journal   = {Journal of Fixed Point Theory and Applications},
  volume    = {3},
  number    = {2},
  pages     = {431--447},
  year      = {2008},
  publisher = {Springer},
  doi       = {10.1007/s11784-008-0087-0}
}

@article{ng2003computable,
  author    = {Ng, Lenhard},
  title         = {Computable Legendrian invariants},
  journal       = {Topology},
  volume        = {42},
  number        = {1},
  pages         = {55--82},
  year          = {2003},
  publisher = {Mathematical Sciences Publishers},
  doi       = {10.1016/S0040-9383(02)00010-1}
}

@article{rizell2024Lagrangian,
  author  = {Dimitroglou Rizell, Georgios},
  title   = {Lagrangian approximation of totally real concordances},
  journal = {Proceedings of the London Mathematical Society},
  volume  = {130},
  pages   = {e70042},
  year    = {2025}
}

@incollection{egh2000sft,
  author    = {Eliashberg, Yakov and Givental, Alexander and Hofer, Helmut},
  title     = {Introduction to symplectic field theory},
  booktitle = {Visions in Mathematics},
  pages     = {560--673},
  year      = {2000},
  publisher = {Birkh{\"a}user},
  doi       = {10.1007/978-3-0346-0425-3_4},
  eprint    = {math/0010059},
  archivePrefix = {arXiv},
  note      = {Special volume, GAFA 2000, Part II}
}

@article{arn1,
  author   = {Arnol'd, Vladimir I.},
  title    = {Lagrange and {L}egendre cobordisms. {I}},
  journal  = {Functional Analysis and Its Applications},
  volume   = {14},
  number   = {3},
  pages    = {167--177},
  year     = {1980},
  doi      = {10.1007/BF01086095}
}

@article{arn2,
  author   = {Arnol'd, Vladimir I.},
  title    = {Lagrange and {L}egendre cobordisms. {II}},
  journal  = {Functional Analysis and Its Applications},
  volume   = {14},
  number   = {4},
  pages    = {252--260},
  year     = {1980},
  doi      = {10.1007/BF01078301}
}

@article{Sau04,
  author  = {Sauvaget, Denis},
  title   = {Curiosit{\'e}s lagrangiennes en dimension 4},
  journal = {Annales de l'Institut Fourier (Grenoble)},
  volume  = {54},
  number  = {6},
  pages   = {1997--2020},
  year    = {2004}
}

@article{Chantraine2015sym,
  author    = {Baptiste Chantraine},
  title     = {Lagrangian concordance is not a symmetric relation},
  journal   = {Quantum Topology},
  volume    = {6},
  number    = {3},
  pages     = {451--474},
  year      = {2015},
  doi       = {10.4171/QT/68},
  eprint    = {1301.3767},
  archivePrefix = {arXiv}
}
\bibliographystyle{plain}
\end{document}